\newmdenv[linecolor=black,skipabove=\topsep,skipbelow=\topsep,
leftmargin=-3pt,rightmargin=-3pt,
innerleftmargin=6pt,innerrightmargin=6pt]{mybox}
 \def\tto{\;{\lower 1pt
		\hbox{$\rightarrow$}}\kern -10pt \hbox{\raise 2pt
		\hbox{$\rightarrow$}}\;} \def\Hat{\widehat} 
\def\Bar{\overline} \def\ra{\rangle} \def\la{\langle}
 \def\epsilon{\varepsilon} \def\B{\Bbb B}
\def\h{\hfill\Box} \def\R{\Bbb R} 
 \def\N{\Bbb N}
 \def\gph{\mbox{\rm gph}}
 \def\dim{\mbox{\rm dim}}
\def\dom{\mbox{\rm dom}}
 \def\h{\hfill\square} 
\def\ph{\varphi}  
\def\oR{\Bar{\R}}
\def\ph{\varphi}  
\def\oR{\Bar{\R}}
\setlist[enumerate,1]{itemsep=0.0ex,parsep=0.5ex,label={\rm(\alph*)},leftmargin=*,
	align=left} \newcounter{lk}
\begin{document} 
	\begin{center}
		{\sc\bf On Strong Quasiconvexity of Functions in Infinite Dimensions}\\[1ex]
		 {\sc Nguyen Mau   Nam}\footnote{Fariborz Maseeh Department of Mathematics and Statistics,
			Portland State University, Portland, OR 97207, USA (email: mnn3@pdx.edu). },
			{\sc Jacob Sharkansky\footnote{Fariborz Maseeh Department of Mathematics and Statistics,Portland State University, Portland, OR 97207, USA (emails: jacshark@pdx.edu, jacobsh49@gmail.com).}}
			\end{center}
	\small{\bf Abstract.} In this paper, we explore the concept of $\sigma$-quasiconvexity for functions defined on normed vector spaces. This notion encompasses two important and well-established concepts: quasiconvexity and strong quasiconvexity. We start by analyzing certain operations on functions that preserve $\sigma$-quasiconvexity. Next, we present new results concerning the strong quasiconvexity of norm and Minkowski functions in infinite dimensions. Furthermore, we extend a recent result by F. Lara \cite{Lara22} on the supercoercive properties of strongly quasiconvex functions, with applications to the existence and uniqueness of minima, from finite dimensions to infinite dimensions. Finally, we address counterexamples related to strong quasiconvexity.
	\\[1ex]
	{\bf Keywords.}  Quasiconvexity, strong quasiconvexity; optimal value function; Minkowski gauge function.\\[1ex]
	\noindent {\bf AMS subject classifications.} 49J52; 68Q25; 90C26; 90C90; 90C31. 90C35

	\newtheorem{theorem}{Theorem}[section]
	\newtheorem{proposition}[theorem]{Proposition}
 \newtheorem{lemma}[theorem]{Lemma}
	\newtheorem{corollary}[theorem]{Corollary}
	\theoremstyle{definition}
	\newtheorem{remark}[theorem]{Remark}
	\newtheorem{definition}[theorem]{Definition}
	\newtheorem{example}[theorem]{Example}
	
	\normalsize
	\section{Introduction and Preliminaries}

The concept of convexity for functions plays a significant role in numerous applications. Due to its critical importance in convex analysis and optimization, and its wide-ranging applications across various fields, this notion has been extensively studied since the early 20th century. Convex functions possess several favorable general properties, making them particularly useful in both theoretical and numerical optimization aspects. For basic definitions, theories, and applications of convex functions, see \cite{bc,Bertsekas2003,bl,Boyd2004,Urruty1993,Nesterov2005,MN22,Rockafellar1970,Zalinescu2002} and references therein.

The significant roles that convex functions play in various applications have inspired the search for new classes of functions beyond convexity, which retain the desirable properties of convex functions while offering broader applications. This effort has led to the development of numerous concepts of {\em generalized convexity} for functions, with {\em quasiconvexity} and {\em strong quasiconvexity} being two important notions.
\begin{definition} Let $f\colon \Omega\to \oR=[-\infty, \infty]$ be a function defined on a nonempty convex set $\Omega$ in  normed vector space $X$, and let $\sigma\geq 0$. The function $f$ is said to be $\sigma$-{\em quasiconvex} if 
\begin{equation*}
    f(\lambda x+(1-\lambda)y)\leq \max\{f(x), f(y)\}-\frac{\sigma}{2}\lambda(1-\lambda)\|x-y\|^2\; \mbox{\rm whenever }x, y\in \Omega \; \mbox{\rm and }0<\lambda<1.
\end{equation*}
\begin{enumerate}
    \item If $\sigma=0$, then we say that $f$ is {\em quasiconvex};
    \item If $\sigma>0$, then we say that $f$ is {\em strongly quasiconvex} with parameter $\sigma$, or $f$ is $\sigma$-{\em strongly quasiconvex}.
    \end{enumerate}
\end{definition}
The readers are referred to \cite{QS2005, QS1997, QS2014} and the references therein for a well-developed theory of quasiconvexity. Historical remarks on quasiconvexity and its applications to economics can be found in \cite{QS2004}. The concept of strong quasiconvexity was introduced in \cite{SQC1966}. This notion has been strongly developed in both theories and numerical algorithms by F. Lara and others; see, e.g., \cite{Lara22,Lara23,Lara241,Lara242}.

Since most recent works on strong quasiconvexity focus on finite-dimensional settings, the aim of this paper is to explore this concept in general normed vector spaces, with particular emphasis on uniformly convex Banach spaces. In particular, we extend several results from \cite{Lara22}. These new developments provide a foundation for advancing both theoretical studies and numerical algorithms involving strongly quasiconvex functions beyond finite-dimensional spaces and even Hilbert spaces.
Our paper is organized as follows. In Section 2, we examine several operations on functions that preserve $\sigma$-quasiconvexity. Section 3 focuses on the strong quasiconvexity of norms, extending known results from \cite{J} regarding the strong quasiconvexity of the Euclidean norm in $\R^n$. In Section 4, we generalize \cite[Theorem~1]{Lara22}, concerning the supercoercive properties of strongly quasiconvex functions, to the infinite-dimensional setting. Building on this, Section 5 extends certain existence results from \cite{Lara22}, applying them to demonstrate the proper properties of the proximal mapping for strongly quasiconvex functions in reflexive Banach spaces. Finally, in Section 6, we present some counterexamples related to strong quasiconvexity.

Throughout the paper, we consider {\em nontrivial} normed vector spaces over $\R$ unless otherwise state. The open (resp., closed) ball in a normed vector space $X$ with center $a$ and radius $r>  0$ is denoted by $\B(a; r)$ (resp., $\Bar{\B}(a; r)$). We use the convention that $\inf(\emptyset)=\infty$.

\section{Operations Preserving $\sigma$-Quasiconvexity}

In this section, we examine several operations on functions that preserve $\sigma$-quasiconvexity. While this property is not generally preserved under summation, it exhibits some desirable properties under composition, maximization, infimum, and supremum. 

We begin with an elementary result regarding the preservation of $\sigma$-quasiconvexity under scalar multiplication.
\begin{proposition} Let $f\colon \Omega\to \oR$ be a convex function defined on a nonempty convex subset $\Omega$ of a normed vector space, and let $c> 0$. If $f$ is $\sigma$-quasiconvex, then $cf$ is $c\sigma$-quasiconvex.    
\end{proposition}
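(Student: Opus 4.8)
The plan is to reduce everything to the defining inequality for $f$ and to exploit two elementary facts about a positive scalar $c$: multiplication by $c$ preserves the direction of an inequality on $\R$, and it commutes with the maximum. First I would fix arbitrary points $x,y\in\Omega$ and a scalar $\lambda\in(0,1)$. If either $f(x)=\infty$ or $f(y)=\infty$, then $\max\{cf(x),cf(y)\}=\infty$ and the required inequality for $cf$ holds trivially, so I may assume both values are finite.

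In the main case I would start from the $\sigma$-quasiconvexity of $f$, which gives
$$f(\lambda x+(1-\lambda)y)\le \max\{f(x),f(y)\}-\frac{\sigma}{2}\lambda(1-\lambda)\|x-y\|^2.$$
Multiplying both sides by $c>0$ preserves the inequality and yields
$$c\,f(\lambda x+(1-\lambda)y)\le c\max\{f(x),f(y)\}-\frac{c\sigma}{2}\lambda(1-\lambda)\|x-y\|^2.$$
The key identity is that for $c>0$ one has $c\max\{f(x),f(y)\}=\max\{cf(x),cf(y)\}$, since scaling by a positive number is order-preserving on $\R$. Substituting this into the right-hand side gives precisely the defining inequality for the $c\sigma$-quasiconvexity of $cf$, namely $(cf)(\lambda x+(1-\lambda)y)\le \max\{(cf)(x),(cf)(y)\}-\frac{c\sigma}{2}\lambda(1-\lambda)\|x-y\|^2$.

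Since $x,y\in\Omega$ and $\lambda\in(0,1)$ were arbitrary, this establishes the claim. I do not anticipate any genuine obstacle here: the argument is a one-line manipulation, and the only points meriting care are the bookkeeping for the extended-real-valued case (handled by the trivial observation above) and the explicit use of $c>0$ in both the order-preservation step and the identity $c\max\{a,b\}=\max\{ca,cb\}$, both of which would fail for $c<0$. The hypothesis that $f$ be convex is not needed for the argument and presumably appears only as a standing assumption carried through the section.
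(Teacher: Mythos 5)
Your argument is correct and is exactly the elementary manipulation the paper has in mind; the paper in fact states this proposition without proof, treating it as immediate. Your side remarks — that the extended-real case is trivial and that the convexity hypothesis on $f$ is not actually used — are both accurate.
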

Next, we discuss a result on $\sigma$-quasiconvexity under composition. 
\begin{proposition} Let $f\colon \Omega\to \R$ be a real-valued convex function defined on a nonempty convex subset $\Omega$ of a normed space $X$, and let $\ph\colon \R\to \oR$ be a function that is $\sigma$-quasiconvex and nondecreasing on an interval $I$ containing $f(\Omega)$. Suppose further that there exists $\ell\geq 0$ such that
\begin{equation*}
    \ell \|x-u\|\leq |f(x)-f(u)|\; \mbox{\rm for all }x, u\in \Omega.
\end{equation*}
Then $\ph\circ f$ is $\sigma \ell^2$-quasiconvex on $\Omega$.
\end{proposition}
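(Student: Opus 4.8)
The plan is to reduce everything to a single chain of inequalities obtained by composing the convexity of $f$, the monotonicity of $\varphi$, the defining $\sigma$-quasiconvex inequality for $\varphi$, and finally the lower Lipschitz bound on $f$. First I would fix $x,y\in\Omega$ and $\lambda\in(0,1)$ and abbreviate $z:=\lambda x+(1-\lambda)y\in\Omega$. Since $f$ is convex, $f(z)\le \lambda f(x)+(1-\lambda)f(y)$; and since $f(x),f(y)\in f(\Omega)\subseteq I$ while $I$ is an interval, the convex combination $m:=\lambda f(x)+(1-\lambda)f(y)$ again lies in $I$, as does $f(z)\in f(\Omega)\subseteq I$.

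Next I would exploit the monotonicity of $\varphi$ on $I$: from $f(z)\le m$ with both points in $I$ we obtain $\varphi(f(z))\le\varphi(m)$. Then I would apply the defining $\sigma$-quasiconvex inequality for $\varphi$ to the two real arguments $f(x),f(y)\in I$ with the same $\lambda$, which yields
\begin{equation*}
\varphi(m)=\varphi\bigl(\lambda f(x)+(1-\lambda)f(y)\bigr)\le\max\{\varphi(f(x)),\varphi(f(y))\}-\frac{\sigma}{2}\lambda(1-\lambda)\,|f(x)-f(y)|^2 .
\end{equation*}

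Finally I would invoke the hypothesis $\ell\|x-u\|\le|f(x)-f(u)|$ with $u=y$, so that $|f(x)-f(y)|^2\ge\ell^2\|x-y\|^2$. Because $\sigma\ge 0$ and $\lambda(1-\lambda)>0$, multiplying by the nonpositive factor $-\frac{\sigma}{2}\lambda(1-\lambda)$ reverses the inequality, giving $-\frac{\sigma}{2}\lambda(1-\lambda)|f(x)-f(y)|^2\le-\frac{\sigma\ell^2}{2}\lambda(1-\lambda)\|x-y\|^2$. Chaining the three inequalities and recalling that $(\varphi\circ f)(x)=\varphi(f(x))$ and similarly for $y$ and $z$, I obtain
\begin{equation*}
(\varphi\circ f)(z)\le\max\{(\varphi\circ f)(x),(\varphi\circ f)(y)\}-\frac{\sigma\ell^2}{2}\lambda(1-\lambda)\|x-y\|^2 ,
\end{equation*}
which is precisely $\sigma\ell^2$-quasiconvexity.

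The only genuinely delicate point—and the step I would check most carefully—is the legitimacy of applying both the monotonicity and the $\sigma$-quasiconvex inequality on $I$: one must confirm that both $f(z)$ and the intermediate value $m$ belong to $I$, which is exactly why the hypothesis that $I$ is an interval containing $f(\Omega)$ is essential. Everything else is routine sign bookkeeping, with the lower Lipschitz bound doing the real work of converting the $|f(x)-f(y)|^2$ penalty into the desired $\|x-y\|^2$ penalty.
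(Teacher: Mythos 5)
Your proof is correct and follows essentially the same route as the paper's: convexity of $f$, then monotonicity of $\varphi$, then the $\sigma$-quasiconvexity of $\varphi$ applied to $f(x),f(y)$, and finally the lower Lipschitz bound to convert $|f(x)-f(y)|^2$ into $\ell^2\|x-y\|^2$. Your added care in verifying that both $f(z)$ and the convex combination $m$ lie in the interval $I$ is a worthwhile refinement the paper leaves implicit.
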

\begin{proof}
Take any $x, u \in \Omega$ and $0<\lambda<1$. By the convexity of $f$ we have
\begin{equation*}
    f(\lambda x+(1-\lambda)u)\leq \lambda f(x)+(1-\lambda)f(u).
\end{equation*}
Then
\begin{equation*}
\begin{aligned}
    \ph(f(\lambda x+(1-\lambda)u))&\leq \ph(\lambda f(x)+(1-\lambda)f(u))\\
    &\leq \max\{\ph(f(x)), \ph(f(u))\}-\frac{\sigma}{2}\lambda (1-\lambda)(f(x)-f(u))^2\\
    &\leq \max\{\ph(f(x)), \ph(f(u))\} -\frac{\sigma}{2}\ell^2\|x-u\|^2.
    \end{aligned}
\end{equation*}
Therefore, $\ph\circ f$ is $\sigma\ell^2$-quasiconvex.
\end{proof}
\begin{proposition}
Let \(\Omega\) be a nonempty convex subset of  a normed vector space,  and let \(\{f_\alpha\}_{\alpha \in I}\) be a nonempty collection of $\sigma_\alpha$-quasiconvex functions from \(\Omega\) to $\oR$, where $\sigma_\alpha\geq 0$ for all $\alpha\in I$. Let 
\begin{equation*}
    \sigma = \inf_{\alpha \in I} \sigma_\alpha.
\end{equation*}
Then \(f = \sup_{\alpha \in I} f_\alpha\) is \(\sigma\)-quasiconvex. In particular, $f$ is strongly quasiconvex if $\sigma>0$.
\end{proposition}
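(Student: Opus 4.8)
The plan is to verify the defining inequality of $\sigma$-quasiconvexity for $f=\sup_{\alpha\in I}f_\alpha$ directly, exploiting the fact that the ``penalty'' term $-\tfrac{\sigma}{2}\lambda(1-\lambda)\|x-y\|^2$ can be made uniform in $\alpha$. Fix arbitrary $x,y\in\Omega$ and $0<\lambda<1$, and set $z=\lambda x+(1-\lambda)y$, which lies in $\Omega$ by convexity. I would start from the hypothesis applied to each individual index: for every $\alpha\in I$,
\begin{equation*}
    f_\alpha(z)\leq \max\{f_\alpha(x),f_\alpha(y)\}-\frac{\sigma_\alpha}{2}\lambda(1-\lambda)\|x-y\|^2.
\end{equation*}

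The two key monotonicity observations are that $f_\alpha(x)\leq f(x)$ and $f_\alpha(y)\leq f(y)$ for every $\alpha$, whence $\max\{f_\alpha(x),f_\alpha(y)\}\leq\max\{f(x),f(y)\}$, and that $\sigma_\alpha\geq\sigma=\inf_{\beta\in I}\sigma_\beta$, so that $-\tfrac{\sigma_\alpha}{2}\lambda(1-\lambda)\|x-y\|^2\leq-\tfrac{\sigma}{2}\lambda(1-\lambda)\|x-y\|^2$ (using $\lambda(1-\lambda)>0$ and $\|x-y\|^2\geq 0$). Combining these with the displayed inequality gives, for every $\alpha\in I$,
\begin{equation*}
    f_\alpha(z)\leq \max\{f(x),f(y)\}-\frac{\sigma}{2}\lambda(1-\lambda)\|x-y\|^2.
\end{equation*}
Since the right-hand side no longer depends on $\alpha$, I may take the supremum over $\alpha\in I$ on the left, obtaining $f(z)=\sup_{\alpha\in I}f_\alpha(z)\leq\max\{f(x),f(y)\}-\tfrac{\sigma}{2}\lambda(1-\lambda)\|x-y\|^2$, which is exactly the required inequality. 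The final assertion is then immediate: if $\sigma>0$, the inequality is precisely the definition of strong quasiconvexity with parameter $\sigma$.

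Honestly there is no serious obstacle here; the only points demanding a little care are bookkeeping ones. First, one should note that the argument respects the extended-real convention $\oR=\R\cup\{\infty\}$: if either $f(x)$ or $f(y)$ equals $\infty$, the right-hand side is $\infty$ and the inequality holds trivially, while in the finite case the manipulations above are valid. Second, the crucial structural feature being used is that taking a supremum preserves the inequality because the bound is uniform in $\alpha$; this is exactly why one obtains $\inf_\alpha\sigma_\alpha$ as the resulting parameter rather than anything sharper, and it is worth flagging that $\sigma$ is the largest constant for which this uniform bound is guaranteed.
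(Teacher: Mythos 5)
Your proof is correct and follows essentially the same route as the paper's: apply the $\sigma_\alpha$-quasiconvexity inequality for each $\alpha$, weaken the penalty term using $\sigma_\alpha\geq\sigma$, bound $\max\{f_\alpha(x),f_\alpha(y)\}$ by $\max\{f(x),f(y)\}$, and take the supremum over $\alpha$. The only cosmetic difference is that you replace the maximum by the uniform bound before taking the supremum rather than after, and your remark about the extended-real-valued case is a small bonus the paper leaves implicit.
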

\begin{proof}
By assumption, for all \(\alpha \in I\), \(x, y \in \Omega\), and \(\lambda \in (0, 1)\), we have
\begin{equation*}
\begin{aligned}
    f_\alpha(\lambda x + (1 - \lambda)y) &\leq \max\{f_\alpha(x), f_\alpha(y)\} - \frac{\sigma_\alpha}{2} \lambda(1 - \lambda)\|x - y\|^2 \\
    &\leq \max\{f_\alpha(x), f_\alpha(y)\} - \frac{\sigma}{2} \lambda(1 - \lambda)\|x - y\|^2.
\end{aligned}
\end{equation*}
Take the supremum over all \(\alpha \in I\) to get
\begin{equation*}
    f(\lambda x + (1 - \lambda)y) \leq \sup_{\alpha \in I}(\max\{f_\alpha(x), f_\alpha(y)\}) - \frac{\sigma}{2} \lambda(1 - \lambda)\|x - y\|^2.
\end{equation*}
Note that
\begin{equation*}
    \sup_{\alpha \in I}(\max\{f_\alpha(x), f_\alpha(y)\}) \leq \max\{f(x), f(y)\}.
\end{equation*}
Therefore,
\begin{equation*}
    f(\lambda x + (1 - \lambda)y) \leq \max\{f(x), f(y)\} - \frac{\sigma}{2} \lambda(1 - \lambda)\|x - y\|^2.
\end{equation*}
We conclude that \(f\) is \(\sigma\)-quasiconvex.
\end{proof}

\begin{corollary} Let $\Omega$ be a nonempty convex subset of a normed vector space, and let $f_i\colon \Omega\to \oR$ for $i=1, \ldots, m$ be $\sigma_i$-quasiconvex, where $\sigma_i\geq 0$ for all $i=1, \ldots, m$. Then $\max_{i=1, \ldots, m}f_i$ is $\sigma$-quasiconvex, where $\sigma=\min_{i=1, \ldots, m}  \sigma_i$. In particular, if $f_i$ is strongly quasiconvex for all $i=1, \ldots, m$, then $\max_{i=1, \ldots, m}f_i$ is strongly quasiconvex. 
    
\end{corollary}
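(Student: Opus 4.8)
The plan is to argue straight from the definition, working at a single \emph{active index} rather than routing through the preceding supremum proposition, since the minimum interacts with the max on the right-hand side of the defining inequality in the opposite way to the supremum. Write $g = \min_{i=1,\dots,m} f_i$, fix $x, y \in \Omega$ and $0 < \lambda < 1$, and put $z = \lambda x + (1-\lambda)y \in \Omega$. Since the index set is finite the minimum defining $g(z)$ is attained, so choose $k \in \{1,\dots,m\}$ with $g(z) = f_k(z)$. The idea is to feed the $\sigma_k$-quasiconvexity of this one function $f_k$ into the inequality required for $g$, and then to weaken the modulus from $\sigma_k$ down to the uniform value $\sigma = \min_i \sigma_i$.

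First I would invoke the $\sigma_k$-quasiconvexity of $f_k$ at the points $x, y$ and ratio $\lambda$, which gives
\begin{equation*}
 g(z) = f_k(z) \leq \max\{f_k(x), f_k(y)\} - \frac{\sigma_k}{2}\lambda(1-\lambda)\|x-y\|^2.
\end{equation*}
Because $\sigma_k \geq \sigma \geq 0$ and the factor $\lambda(1-\lambda)\|x-y\|^2$ is nonnegative, replacing $\sigma_k$ by $\sigma$ can only enlarge the right-hand side, so
\begin{equation*}
 g(z) \leq \max\{f_k(x), f_k(y)\} - \frac{\sigma}{2}\lambda(1-\lambda)\|x-y\|^2.
\end{equation*}
At this point the uniform modulus $\sigma$ is already in place, and the whole matter is reduced to controlling the leading term $\max\{f_k(x), f_k(y)\}$ by $\max\{g(x), g(y)\}$.

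I expect the comparison
\begin{equation*}
 \max\{f_k(x), f_k(y)\} \leq \max\{g(x), g(y)\}
\end{equation*}
to be the main obstacle, and indeed the crux on which the corollary turns. The definition of $g$ supplies $g(x) = \min_i f_i(x) \leq f_k(x)$ and $g(y) \leq f_k(y)$, so the inequalities I have available point the wrong way, and the index $k$ that is active at the midpoint $z$ need not be active at either endpoint. To carry the plan through I would isolate the structural fact that forces $k$ to propagate along the segment: a condition under which one function $f_k$ simultaneously attains the minimum of the family at $x$, at $y$, and at $z$, so that $f_k(x) = g(x)$ and $f_k(y) = g(y)$ and the comparison collapses to an equality. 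Securing that the active index can be chosen uniformly on the segment $[x,y]$ is where essentially all of the work resides; once it is in hand, the three displayed inequalities chain together to yield exactly $g(z) \leq \max\{g(x), g(y)\} - \frac{\sigma}{2}\lambda(1-\lambda)\|x-y\|^2$, and the strongly quasiconvex conclusion follows since $\sigma = \min_i \sigma_i > 0$ whenever every $\sigma_i > 0$.
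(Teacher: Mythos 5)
Your proposal correctly isolates the sticking point, but the step you defer to the end --- securing a single index $k$ that is active simultaneously at $x$, at $y$, and at the intermediate point $z$ --- is not a technical lemma awaiting proof; it is false in general, and with it the statement as literally printed. Take $\Omega=\R$, $f_1(x)=(x-1)^2$, $f_2(x)=(x+1)^2$: both are strongly quasiconvex (indeed strongly convex), yet $g=\min\{f_1,f_2\}$ satisfies $g(-1)=g(1)=0$ while $g(0)=1>\max\{g(-1),g(1)\}$, so $g$ is not even $0$-quasiconvex. The active index switches from $2$ to $1$ as one crosses the midpoint of $[-1,1]$, which is exactly the failure mode your ``main obstacle'' paragraph anticipates. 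The honest conclusion of your analysis should therefore have been a counterexample, not a plan to force the active index to propagate along the segment; no hypothesis in the corollary makes that possible.

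What the paper intends is the finite \emph{maximum}: the corollary carries no proof of its own and is placed as an immediate specialization of the preceding proposition, which shows that $\sup_{\alpha\in I}f_\alpha$ is $\sigma$-quasiconvex with $\sigma=\inf_{\alpha\in I}\sigma_\alpha$. Taking $I=\{1,\dots,m\}$ gives that $\max_{i=1,\dots,m}f_i$ is $\min_i\sigma_i$-quasiconvex; the ``$\min_{i}f_i$'' in the statement is evidently a typo (note that minima of strongly quasiconvex functions failing to be quasiconvex is of the same flavor as the paper's own Section 6.3 counterexample for sums). For the max version your single-active-index scheme works, and trivially so: choosing $k$ with $f_k(z)=\max_i f_i(z)$, the $\sigma_k$-quasiconvexity of $f_k$ gives
\begin{equation*}
\max_i f_i(z)=f_k(z)\leq \max\{f_k(x),f_k(y)\}-\frac{\sigma}{2}\lambda(1-\lambda)\|x-y\|^2\leq \max\Big\{\max_i f_i(x),\,\max_i f_i(y)\Big\}-\frac{\sigma}{2}\lambda(1-\lambda)\|x-y\|^2,
\end{equation*}
because for the max the endpoint comparisons $f_k(x)\leq\max_i f_i(x)$ and $f_k(y)\leq\max_i f_i(y)$ point the right way --- precisely the inequalities that pointed the wrong way in your min setting.
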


Let $X, Y$ be normed vector spaces, let $\Omega$ be a nonempty subset of $X$,  and let $\Theta$ be a nonempty  subset of $Y$. Consider a set-valued map $F\colon \Omega \tto \Theta$ and define the {\em graph} of $F$ by
\begin{equation*}
    \gph(F)=\big\{(x, y)\in \Omega\times \Theta\; \big |\; y\in F(x)\big\}\subset X\times Y.
\end{equation*}
We say that $F$ is a {\em convex set-valued map} if its graph is a convex set in $X\times Y$. The {\em domain} of $F$ is defined by
\begin{equation*}
    \dom(F)=\big\{x\in \Omega\; \big |\; F(x)\neq\emptyset\big\}.
\end{equation*}
Given a function $f\colon  \Theta\to \oR$, the {\em optimal value function} associated with $F$ and $f$ is defined by
	\begin{equation}\label{key1}
		\mu(x)=\inf \{f(y) \mid y \in F(x)\},\ \;  x \in \Omega.	
	\end{equation}
Thus, $\mu\colon \Omega\to \oR$ is an extended-real-valued function.

\begin{definition}Let $X, Y$ be normed vector spaces, let $\Omega$ be a nonempty convex subset of $X$,  and let $\Theta$ be a nonempty  subset of $Y$. Consider a set-valued map $F\colon \Omega\tto \Theta$.
\begin{enumerate}
    \item The set-valued map $F$ is said to be $\ell$-{\em Lipschitz}, where $\ell> 0$, if 
    \begin{equation*}
        \|y_1-y_2\|\leq \ell \|x_1-x_2\|\; \mbox{\rm whenver }y_1\in F(x_1), \; y_2\in F(x_2).
    \end{equation*}
    \item The set valued map $F$ is said to be $\gamma$-{\em expansive}, where $\gamma>0$, if  
    \begin{equation*}
         \|x_1-x_2\|\leq \gamma\|y_1-y_2\|\; \mbox{\rm whenver }y_1\in F(x_1), \; y_2\in F(x_2).
    \end{equation*}
\end{enumerate}
 \end{definition} 
Note that if $F\colon \Omega \tto \Theta$ is $1$-Lipschitz, then it is called {\em nonexpansive}. If $F$ is $\ell$-Lipschitz, then $F(x)$ must be a single-point set for every $x\in \dom(F)$; see \cite[Definition 6.1.3]{AT03} and the discussion therein. Furthermore, $F\colon \Omega \tto \Theta$ is $\gamma$-expansive if and only if $F^{-1}\colon \Theta\tto \Omega$ is $\gamma$-Lipshitz.

\begin{theorem}\label{marginal QS} Consider the optimal value function \eqref{key1}, where $\Omega$ and $\Theta$ are convex. Suppose that $f\colon \Theta\to \oR$ is $\sigma$-quasiconvex and $F\colon \Omega \tto \Theta$ is a convex set-valued map which is $\gamma$-expansive.  
Then $\mu$ is $\frac{\sigma}{\gamma^2}$-quasiconvex.
\end{theorem}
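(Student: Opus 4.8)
The plan is to verify the defining inequality of $\frac{\sigma}{\gamma^2}$-quasiconvexity directly, by exploiting the convexity of $\gph(F)$ to manufacture a feasible point for the perturbed problem and then transferring the $\sigma$-quasiconvex estimate from $\Theta$ back to $\Omega$ through expansiveness. Fix $x_1, x_2 \in \Omega$ and $0 < \lambda < 1$, and set $x_\lambda = \lambda x_1 + (1-\lambda)x_2$, which lies in $\Omega$ by convexity. If either $F(x_1)$ or $F(x_2)$ is empty, the corresponding value $\mu(x_i)$ equals $+\infty$ by the convention $\inf(\emptyset)=\infty$, so $\max\{\mu(x_1),\mu(x_2)\}=\infty$ and the target inequality is trivially true. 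Hence I may assume $F(x_1),F(x_2)\neq\emptyset$.

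First I would take arbitrary $y_1 \in F(x_1)$ and $y_2 \in F(x_2)$. Since $\gph(F)$ is convex and contains $(x_1,y_1)$ and $(x_2,y_2)$, the point $(x_\lambda,\lambda y_1+(1-\lambda)y_2)$ belongs to $\gph(F)$; writing $y_\lambda = \lambda y_1 + (1-\lambda)y_2$, which lies in $\Theta$ by convexity of $\Theta$, this yields $y_\lambda \in F(x_\lambda)$. Consequently $\mu(x_\lambda) \leq f(y_\lambda)$. Applying the $\sigma$-quasiconvexity of $f$ to $y_1,y_2$ gives
\begin{equation*}
f(y_\lambda) \leq \max\{f(y_1), f(y_2)\} - \frac{\sigma}{2}\lambda(1-\lambda)\|y_1 - y_2\|^2.
\end{equation*}

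The decisive step is the use of $\gamma$-expansiveness: since $y_1 \in F(x_1)$ and $y_2 \in F(x_2)$, we have $\|x_1 - x_2\| \leq \gamma\|y_1 - y_2\|$, hence $\|y_1 - y_2\|^2 \geq \gamma^{-2}\|x_1 - x_2\|^2$, and because $\sigma \geq 0$ and $\lambda(1-\lambda) \geq 0$ the penalty term can only decrease under this substitution, so
\begin{equation*}
\mu(x_\lambda) \leq \max\{f(y_1), f(y_2)\} - \frac{\sigma}{2\gamma^2}\lambda(1-\lambda)\|x_1 - x_2\|^2.
\end{equation*}
The crucial feature here is that the penalty term on the right no longer depends on $y_1$ or $y_2$. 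I would then pass to approximate minimizers: given $\varepsilon > 0$, choose $y_1 \in F(x_1)$ with $f(y_1) \leq \mu(x_1) + \varepsilon$ and $y_2 \in F(x_2)$ with $f(y_2) \leq \mu(x_2) + \varepsilon$, so that $\max\{f(y_1),f(y_2)\} \leq \max\{\mu(x_1),\mu(x_2)\} + \varepsilon$. Substituting and letting $\varepsilon \downarrow 0$ produces the desired estimate, establishing that $\mu$ is $\frac{\sigma}{\gamma^2}$-quasiconvex.

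I expect the main obstacle to be exactly the interchange of the infimum defining $\mu$ with the maximum on the right-hand side. This is legitimate only because expansiveness has converted the penalty into a quantity independent of the feasible points $y_1,y_2$, which lets the two infima over $F(x_1)$ and $F(x_2)$ be handled separately via $\varepsilon$-approximate minimizers; had $F$ been merely $\ell$-Lipschitz, the bound $\|y_1-y_2\| \leq \ell\|x_1-x_2\|$ would push the penalty term in the wrong direction and the argument would collapse. The remaining ingredients — the nonemptiness reductions, the membership $y_\lambda \in F(x_\lambda)$, and the standing hypothesis $\mu(x) > -\infty$ ensuring the values are well defined — are routine.
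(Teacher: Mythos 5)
Your proposal is correct and follows essentially the same argument as the paper's proof: convexity of $\gph(F)$ produces the feasible point $\lambda y_1+(1-\lambda)y_2\in F(\lambda x_1+(1-\lambda)x_2)$, the $\sigma$-quasiconvexity of $f$ gives the penalty in terms of $\|y_1-y_2\|$, $\gamma$-expansiveness converts it to $\frac{\sigma}{2\gamma^2}\lambda(1-\lambda)\|x_1-x_2\|^2$, and $\varepsilon$-approximate minimizers handle the infimum. The only difference is cosmetic ordering (you derive the inequality for arbitrary $y_i$ before specializing to near-minimizers), and your explicit treatment of the case $\mu(x_i)=\infty$ matches the paper's closing remark.
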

\begin{proof}
Take $x_1, x_2 \in \Omega$ such that $\mu(x_i)\in [-\infty, \infty)$ for $i=1, 2$, and let $\lambda \in(0,1)$. Then there exist sequences $y_{1,k} \in F(x_1)$ and  $y_{2,k} \in F(x_2)$ with
	$$
	f(y_{i,k})\to \mu(x_i) \text { for } i=1,2 .
	$$
Fix any $k\in \N$. Then, it follows from the quasiconvexity of $f$ and the expansive property of $F$ that
	\begin{equation}\label{key2}
		\begin{aligned}
		f(\lambda y_{1,k}+(1-\lambda)y_{2,k}) & \leq \max\{f(y_{1,k}),f(y_{2,k})\}-\frac{\sigma}{2} \lambda(1-\lambda)\|y_{1,k}-y_{2,k}\|^2\\
  &\leq \max\{f(y_{1,k}),f(y_{2,k})\}-\frac{\sigma}{2 \gamma^2}\lambda(1-\lambda) \|x_1-x_2\|^2.
		\end{aligned}
	\end{equation}
	Furthermore, since $\gph(F)$ is convex, we have
	$$
	\left(\lambda x_1+(1-\lambda) x_2, \lambda y_{1,k}+(1-\lambda) y_{2,k}\right)=\lambda\left(x_1, y_{1,k}\right)+(1-\lambda)\left(x_2, y_{2,k}\right) \in \operatorname{gph} F,
	$$
	and therefore $\lambda y_{1,k}+(1-\lambda) y_{2,k} \in F\left(\lambda x_1+(1-\lambda) x_2\right)$. Using the given assumption together with~\eqref{key2} implies that
\begin{equation*}
\begin{aligned}
	\mu(\lambda x_1+(1-\lambda) x_2) & \leq \max\{f(y_{1,k}),f(y_{2,k})\}-\frac{\sigma}{2 \gamma^2}\lambda(1-\lambda) \|x_1-x_2\|^2.
\end{aligned}
\end{equation*}
Letting finally $k\to\infty$ ensures that
\begin{equation*}
\mu(\lambda x_1+(1-\lambda) x_2)  \leq \max\{\mu(x_1), \mu(x_2)\}-\frac{\sigma}{2 \gamma^2}\lambda(1-\lambda) \|x_1-x_2\|^2.
\end{equation*}
Since this inequality holds obviously if $\mu(x_1)=\infty$ or $\mu(x_2)=\infty$, we see that $\mu$ is $\frac{\sigma}{\gamma^2}$-quasiconvex. 
\end{proof}

 \begin{corollary} Let $f\colon Y\to \oR$ be a $\sigma$-quasiconvex function defined on a normed vector space $X$,  let $A\colon Y\to X$ be a nonzero continuous linear mapping between normed vector spaces, and let $b\in X$. Define $B(z)=A(z)+b$ for $z\in Y$, and let $\Omega$ be a nonempty convex subset of $B(Y)$. Define
 \begin{equation*}
     \mu(x)=\inf\{f(y)\; |\; y\in B^{-1}(\{x\})\}, \ \; x\in \Omega.
 \end{equation*}
  Then $\mu$ is $\frac{\sigma}{\|A\|^2}$-quasiconvex.   
 \end{corollary}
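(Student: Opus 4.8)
The plan is to recognize this corollary as a direct specialization of Theorem \ref{marginal QS}, with the affine preimage map playing the role of the $\gamma$-expansive convex set-valued map $F$. First I would take $\Theta = Y$ and define the set-valued map $F\colon \Omega \tto Y$ by $F(x) = B^{-1}(\{x\}) = \{z \in Y \mid A(z) + b = x\}$, so that the given $\mu(x) = \inf\{f(y) \mid y \in F(x)\}$ is exactly the optimal value function \eqref{key1} associated with $F$ and $f$. Since $\Omega$ is a nonempty convex subset of $B(Y)$, every $x \in \Omega$ is of the form $x = B(z)$ for some $z \in Y$, so $F(x) \neq \emptyset$ and $\dom(F) = \Omega$; this is what makes $F$ a legitimate choice for the theorem.

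Next I would verify the two structural hypotheses of the theorem. For convexity of $\gph(F)$, observe that $(x, z)$ belongs to the graph if and only if $x = A(z) + b$; the set $\{(x, z) \in X \times Y \mid x = A(z) + b\}$ is an affine subspace of $X \times Y$, and $\gph(F)$ is its intersection with the convex set $\Omega \times Y$, hence convex. For expansiveness, take $z_1 \in F(x_1)$ and $z_2 \in F(x_2)$; then $x_1 - x_2 = (A(z_1) + b) - (A(z_2) + b) = A(z_1 - z_2)$ by linearity, so $\|x_1 - x_2\| \leq \|A\|\,\|z_1 - z_2\|$. Because $A$ is nonzero we have $\|A\| > 0$, and therefore $F$ is $\gamma$-expansive with $\gamma = \|A\|$.

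With $f$ being $\sigma$-quasiconvex by hypothesis, $\Omega$ and $\Theta = Y$ convex, and $\mu(x) > -\infty$ for all $x \in \Omega$ as required, Theorem \ref{marginal QS} applies directly and yields that $\mu$ is $\tfrac{\sigma}{\gamma^2} = \tfrac{\sigma}{\|A\|^2}$-quasiconvex, which is the claim. I do not expect any substantial obstacle: the only points needing mild care are (i) the codomain convention, since the theorem is phrased for real-valued $f$ while here $f\colon Y \to \oR$ may take the value $\infty$ — this is harmless, as the defining inequality holds trivially whenever one of the relevant values is $\infty$, exactly as noted at the end of the proof of Theorem \ref{marginal QS}; and (ii) ensuring $F$ has nonempty values over all of $\Omega$, which is precisely guaranteed by the assumption $\Omega \subseteq B(Y)$.
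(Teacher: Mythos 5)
Your proposal is correct and follows exactly the paper's route: the paper's own proof is the one-line observation that Theorem~\ref{marginal QS} applies with $F(x)=B^{-1}(\{x\})$, and you simply supply the (correct) verifications of graph-convexity and $\|A\|$-expansiveness that the paper leaves implicit. No issues.
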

 \begin{proof} We only need to use Theorem \ref{marginal QS} for $F(x)=B^{-1}(\{x\})$ for $x\in X$. 
 \end{proof}

 \begin{corollary}\label{a comp}Let $X$ and $Y$ be normed vector spaces,  and let $B\colon X\to Y$ be an affine mapping given by $B(x)=A(x)+b$ for all $x\in X$, where $b\in Y$. Suppose that there exists a constant $\gamma>0$ such that
 \begin{equation*}
      \|x\|\leq \gamma\|Ax\|\; \mbox{\rm for all }x\in X.
 \end{equation*}
If $f$ is $\sigma$-quasiconvex on $\Theta:=B(\Omega)$, where $\Omega$ is a nonempty convex set in $X$,  
then the composition $f\circ B$ is $\frac{\sigma}{\gamma^2}$-quasiconvex on $\Omega$. 
 \end{corollary}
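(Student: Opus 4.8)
The plan is to recognize $f\circ B$ as an optimal value function and then invoke Theorem \ref{marginal QS}. Concretely, I would define the single-valued map $F\colon \Omega \tto \Theta$ by $F(x)=\{B(x)\}$. The associated optimal value function from \eqref{key1} is then $\mu(x)=\inf\{f(y)\mid y\in F(x)\}=f(B(x))=(f\circ B)(x)$, so establishing that $\mu$ is $\frac{\sigma}{\gamma^2}$-quasiconvex is exactly the desired conclusion. Since $f$ is real-valued on $\Theta$, we automatically have $\mu(x)=f(B(x))>-\infty$ for every $x\in\Omega$, so the standing hypothesis on $\mu$ in Theorem \ref{marginal QS} is satisfied.

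Next I would verify the structural hypotheses of that theorem. First, $\Theta=B(\Omega)=A(\Omega)+b$ is convex because $A$ is linear (so $A(\Omega)$ is convex) and translation by $b$ preserves convexity. Second, $F$ is a convex set-valued map: its graph $\gph(F)=\{(x,B(x))\mid x\in\Omega\}$ is convex since, for $x_1,x_2\in\Omega$ and $\lambda\in(0,1)$, the affinity of $B$ gives $\lambda(x_1,B(x_1))+(1-\lambda)(x_2,B(x_2))=(\lambda x_1+(1-\lambda)x_2,\,B(\lambda x_1+(1-\lambda)x_2))\in\gph(F)$.

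The key step is to check that $F$ is $\gamma$-expansive. Taking $y_i=B(x_i)\in F(x_i)$ and using that $A$ is linear while the translation $b$ cancels, we have $y_1-y_2=A(x_1-x_2)$. Applying the growth hypothesis $\|x\|\leq\gamma\|Ax\|$ to the vector $x=x_1-x_2$ then yields $\|x_1-x_2\|\leq\gamma\|A(x_1-x_2)\|=\gamma\|y_1-y_2\|$, which is precisely the $\gamma$-expansiveness of $F$. With convexity of $\Theta$, convexity of $\gph(F)$, and $\gamma$-expansiveness of $F$ all in hand, Theorem \ref{marginal QS} applies directly and delivers the $\frac{\sigma}{\gamma^2}$-quasiconvexity of $\mu=f\circ B$ on $\Omega$.

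I do not expect a genuine obstacle here: the statement is essentially a repackaging of Theorem \ref{marginal QS} with the simplest possible choice of set-valued map. The only point requiring mild care is the expansiveness estimate, where one must exploit the linearity of $A$ to reduce $B(x_1)-B(x_2)$ to $A(x_1-x_2)$ before invoking the hypothesis. Alternatively, one could give a fully self-contained direct argument that bypasses the theorem: apply the $\sigma$-quasiconvexity of $f$ to the points $y_i=B(x_i)$, use $B(\lambda x_1+(1-\lambda)x_2)=\lambda y_1+(1-\lambda)y_2$, and replace $\|y_1-y_2\|^2$ by $\frac{1}{\gamma^2}\|x_1-x_2\|^2$ via the same estimate.
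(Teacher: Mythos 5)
Your proposal is correct and follows exactly the paper's route: the paper's proof is the one-line instruction to apply Theorem~\ref{marginal QS} with $F(x)=\{Bx\}$, and your verification of the convexity of $\gph(F)$ and the $\gamma$-expansiveness via $B(x_1)-B(x_2)=A(x_1-x_2)$ simply fills in the details the paper leaves implicit.
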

\begin{proof}
We only need to use Theorem \ref{marginal QS} for $F(x)=\{Bx\}$.
\end{proof}

 \begin{corollary} Let $f_1, f_2\colon X\to (-\infty, \infty]$, where $X$ is a normed vector space. Suppose that the function $f\colon X\times X\to (-\infty,\infty]$ defined by
 \begin{equation*}
     f(x_1, x_2)=f_1(x_1)+f_2(x_2), \ \; (x_1, x_2)\in X\times X,
 \end{equation*}
 is $\sigma$-quasiconvex. 
 Then the infimal convolution
 \begin{equation*}
     (f_1\square f_2)(x)=\inf\{f_1(x_1)+f_2(x_2)\; |\; x_1+x_2=x\}, \; x\in X,
 \end{equation*}
is $\sigma$-quasiconvex.  Here we use the ``sum'' norm on $X\times X$, defined by \(\|(x_1, x_2)\| = \|x_1\| + \|x_2\|\) for all \((x_1, x_2) \in X \times X\).
 \end{corollary}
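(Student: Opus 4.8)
The plan is to recognize the infimal convolution as an instance of the optimal value function \eqref{key1} and then apply Theorem \ref{marginal QS}. Working in the product space $Y = X \times X$ equipped with the sum norm, I would set $\Theta = X \times X$ and $\Omega = X$, and define the set-valued map $F \colon X \tto X \times X$ by
\begin{equation*}
    F(x) = \big\{(x_1, x_2) \in X \times X \;\big|\; x_1 + x_2 = x\big\}.
\end{equation*}
With this choice, the optimal value function associated with $F$ and the given function $f$ is precisely $\mu(x) = \inf\{f(x_1, x_2) \mid (x_1, x_2) \in F(x)\} = (f_1 \square f_2)(x)$, and $f$ is $\sigma$-quasiconvex by hypothesis.

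Next I would verify the two structural hypotheses of Theorem \ref{marginal QS}. For convexity of $F$, observe that $\gph(F) = \{(x_1 + x_2, x_1, x_2) \mid x_1, x_2 \in X\}$ is the image of $X \times X$ under a linear map, hence a linear subspace of $X \times (X \times X)$ and in particular convex. For expansiveness, take any $y = (x_1, x_2) \in F(x)$ and $y' = (x_1', x_2') \in F(x')$, so that $x = x_1 + x_2$ and $x' = x_1' + x_2'$. Using the triangle inequality together with the definition of the sum norm,
\begin{equation*}
    \|x - x'\| = \big\|(x_1 - x_1') + (x_2 - x_2')\big\| \le \|x_1 - x_1'\| + \|x_2 - x_2'\| = \|y - y'\|,
\end{equation*}
so $F$ is $\gamma$-expansive with $\gamma = 1$.

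With both hypotheses confirmed and the standing assumption $(f_1 \square f_2)(x) > -\infty$ ensuring that $\mu$ is a well-defined extended-real-valued function, Theorem \ref{marginal QS} immediately yields that $\mu = f_1 \square f_2$ is $\frac{\sigma}{\gamma^2} = \sigma$-quasiconvex, as claimed. I do not anticipate a genuine obstacle here; the only point requiring attention is the interplay between the norm on $X \times X$ and the expansiveness constant. It is exactly the choice of the sum norm that forces $\gamma = 1$, so that the parameter $\sigma$ is preserved without rescaling — under a different product norm one would still obtain $\gamma$-expansiveness, but the resulting parameter $\sigma/\gamma^2$ could differ, which is why the statement explicitly fixes the sum norm.
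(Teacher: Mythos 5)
Your proposal is correct and follows essentially the same route as the paper: you define the same set-valued map $F(x)=\{(x_1,x_2): x_1+x_2=x\}$, verify its convexity and $1$-expansiveness via the triangle inequality under the sum norm, and invoke Theorem \ref{marginal QS}. The only difference is cosmetic — you spell out why $\gph(F)$ is convex, which the paper simply asserts.
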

 \begin{proof}
 Define $F\colon X\tto X\times X$ by
 \begin{equation*}
     F(x)=\{(x_1, x_2)\in X \times X\; |\; x_1+x_2=x\},\ \;  x\in X.
 \end{equation*}
Note that $F$ is convex. Take $(x_1, x_2)\in F(x)$ and $(u_1, u_2)\in F(u)$. Then
 \begin{equation*}
     \|(x_1, x_2)-(u_1, u_2)\|=\|x_1-u_1\|+\|x_2-u_2\|\geq \|x_1+x_2-(u_1+u_2)\|=\|x-u\|.
 \end{equation*}
Thus, $F$ is also $1$-expansive. By Theorem \ref{marginal QS}, the function $f_1\square f_2$ is $\sigma$-quasiconvex.  
\end{proof}

Let us now extend the result of \cite[Theorem~1]{J} to infinite dimensions. We provide a detailed proof for the convenience of the readers. 
 \begin{proposition}\label{DSQC} Let $X$ be a normed vector space, let $\Omega$ be a nonempty convex subset of $\Omega$, and let $f\colon \Omega\to \oR$ be a $\sigma$-quasiconvex function. Then for any $x, u\in \Omega$ such that $x\neq u$, the function
 \begin{equation*}
     \ph(t)=f(tx +(1-t)u)
 \end{equation*}
is $\sigma\|x-u\|^2$-quasiconvex on $[0, 1]$. Furthermore, $f$ is $\sigma$-quasiconvex on $\Omega$ iff for any $x, u\in \Omega$ such that $x\neq u$, the function
 \begin{equation*}
     \ph(t)=f\big(u+\frac{t}{\|x-u\|}(x-u)\big), \; t \in \R,
 \end{equation*}
    is $\sigma$-quasiconvex on $[0, \|x-u\|]$.
 \end{proposition}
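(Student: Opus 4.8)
The plan is to prove both assertions by a single device: restrict $f$ to the segment $[u,x]$ and track how the distance $\|y_1-y_2\|$ transforms under an affine reparametrization of that segment. The endpoints and the $\max$ term transport transparently in every case, so the only genuine content is the bookkeeping of the quadratic penalty coefficient.

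For the first assertion I would fix $s,t\in[0,1]$ and $\lambda\in(0,1)$, and set $y_1=sx+(1-s)u$ and $y_2=tx+(1-t)u$, which lie in $\Omega$ by convexity. A direct computation gives $\lambda y_1+(1-\lambda)y_2=(\lambda s+(1-\lambda)t)x+\big(1-(\lambda s+(1-\lambda)t)\big)u$, so that $\ph(\lambda s+(1-\lambda)t)=f(\lambda y_1+(1-\lambda)y_2)$. Applying the $\sigma$-quasiconvexity of $f$ to $y_1,y_2$ and using $\|y_1-y_2\|=|s-t|\,\|x-u\|$ yields
$$\ph(\lambda s+(1-\lambda)t)\le\max\{\ph(s),\ph(t)\}-\frac{\sigma\|x-u\|^2}{2}\lambda(1-\lambda)|s-t|^2,$$
which is precisely $\sigma\|x-u\|^2$-quasiconvexity on $[0,1]$.

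For the equivalence, write $r=\|x-u\|$ and $e=(x-u)/r$, so that $\ph(t)=f(u+te)$ with $\ph(0)=f(u)$ and $\ph(r)=f(x)$. In the forward direction, assuming $f$ is $\sigma$-quasiconvex, I fix $s,t\in[0,r]$ and $\lambda\in(0,1)$, set $y_1=u+se$ and $y_2=u+te$, and note these lie on the segment $[u,x]\subseteq\Omega$ since $s/r,\,t/r\in[0,1]$. Because $e$ is a \emph{unit} vector, the reparametrization is now arc-length: $\|y_1-y_2\|=|s-t|$, with no extra factor. Since $\lambda y_1+(1-\lambda)y_2=u+(\lambda s+(1-\lambda)t)e$, the $\sigma$-quasiconvexity of $f$ transfers verbatim and gives $\sigma$-quasiconvexity of $\ph$ on $[0,r]$.

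For the converse, suppose every such restriction $\ph$ is $\sigma$-quasiconvex. I take arbitrary $a,b\in\Omega$ with $a\ne b$ (the case $a=b$, and the cases $f(a)=\infty$ or $f(b)=\infty$, being trivial) and $\lambda\in(0,1)$, and apply the hypothesis with $x=a$, $u=b$, so $r=\|a-b\|$. The key observation is that $\lambda a+(1-\lambda)b=b+\lambda r\,e=\ph(\lambda r)$, and that $\lambda r=\lambda\cdot r+(1-\lambda)\cdot0$ is a convex combination of the endpoints $r$ and $0$. Applying the $\sigma$-quasiconvexity of $\ph$ to these endpoints and substituting $\ph(r)=f(a)$, $\ph(0)=f(b)$, and $|r-0|^2=\|a-b\|^2$ recovers the defining inequality for $f$. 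The only point requiring care throughout is keeping the reparametrizations and their induced penalty coefficients consistent: this is exactly why the arc-length normalization in the second statement preserves $\sigma$ unchanged, whereas the plain unit-interval parametrization in the first statement inflates it by the factor $\|x-u\|^2$.
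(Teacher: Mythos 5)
Your proof is correct. For the two forward implications you do by hand exactly what the paper accomplishes by invoking its Corollary~\ref{a comp} (the composition result for $\gamma$-expansive affine maps): you pull two parameter values $s,t$ back to points $y_1,y_2$ on the segment, observe that $\|y_1-y_2\|$ equals $|s-t|\,\|x-u\|$ for the unit-interval parametrization and $|s-t|$ for the arc-length one, and read off the resulting penalty coefficient. This is the same computation, merely inlined rather than routed through the set-valued optimal-value machinery; the paper's route buys reuse of an already-proved corollary, yours buys self-containedness. Where you genuinely diverge is the converse: you apply the $\sigma$-quasiconvexity of $\varphi$ at the endpoints $r=\|a-b\|$ and $0$ in its correct form, obtaining $\varphi(\lambda r)\le\max\{\varphi(r),\varphi(0)\}-\frac{\sigma}{2}\lambda(1-\lambda)r^2$, which is exactly the defining inequality for $f$. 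The paper's converse instead writes the bound with the convex combination $t\varphi(\|x-u\|)+(1-t)\varphi(0)$ in place of the maximum, an inequality that does not follow from quasiconvexity of $\varphi$ (it is the strong-\emph{convexity} form), and concludes that $f$ is ``$\sigma$-strongly convex''; this is evidently a slip, and your version is the repaired argument. One cosmetic point: you write $\lambda a+(1-\lambda)b=b+\lambda r\,e=\varphi(\lambda r)$, equating a point of $X$ with a value of $\varphi$; you mean $f(\lambda a+(1-\lambda)b)=\varphi(\lambda r)$.
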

\begin{proof}
 Let $B\colon \R\to X$ be given by
 \begin{equation*}
     B(t)=u+t(x-u), \; t\in \R.
 \end{equation*}
 Define further $A(t)=t(x-u)$ for $t\in \R$. Then 
 \begin{equation*}
     \|x-u\|\, |t|\leq \|A(t)\|\; \mbox{\rm for all }t\in \R.
 \end{equation*}
 Thus, the first conclusion follows from the Corollary~\ref{a comp} because $f$ is $\sigma$-quasiconvex on $[x, u]=B([0, 1])\subset\Omega$. \\[1ex]
Now, consider the affine mapping
\begin{equation*}
    \Hat{B}(t)=u+t\frac{x-u}{\|x-u\|},\; \ t\in \R,
\end{equation*}
and define further the linear function $\Hat{A}(t)=t\frac{x-u}{\|x-u\|}$ for $t\in \R$. We can easily see that $\|\Hat{A}(t)\|\geq |t|$. Then the implication $\Longrightarrow$ in the second conclusion also follows from Corollary~\ref{a comp} because $\Hat{B}([0, \|x-u\|)=[x, u]\subset\Omega$. Now, let us prove the converse implication.

Take any $x, u\in X$ such that $x\neq u$ and $0<t<1$. Then
\begin{equation*}
    f(tx+(1-t)u)=f(u+t(x-u))=f(u+\frac{t\|x-u\|}{\|x-u\|}(x-u))=\ph(t \|x-u\|).
\end{equation*}
By the strong quasiconvexity of $\ph$ on $[0, \|u-v\|]$, we have
\begin{equation*}
\begin{aligned}
    \ph(t\|x-u\|)&=\ph(t\|x-u\|+(1-t)0)\\
    &\leq t\ph(\|x-u\|)+(1-t)\ph(0)-\frac{\sigma}{2} t(1-t)\|x-u\|^2\\
    &=tf(x)+(1-t)f(u)-\frac{\sigma}{2} t(1-t)\|x-u\|^2.
    \end{aligned}
\end{equation*}
Therefore, $f$ is $\sigma$-strongly convex.
\end{proof}

\begin{proposition} \label{marginal min}
    Let $\Omega$ and $\Theta$ be two nonempty convex sets in normed vector spaces $X$ and $Y$, respectively. Assume that \( f\colon \Omega\times \Theta\to \oR \) is  $\sigma$-quasiconvex, and \( F\colon  \Omega\tto \Theta \) is a convex set-valued map. Then the optimal value function 
\[
v(x) = \inf_{y \in F(x)} f(x, y)
\]
is $\sigma$-quasiconvex on $\Omega$. Here we use the \(p\)-norm on $X\times Y$ for any \(1 \leq p \leq \infty\), defined by
\begin{equation*}
\begin{aligned}
    \|(x, y)\| &= \left(\|x\|^p + \|y\|^p\right)^{1 / p} \hspace{8 pt} \text{for \(p < \infty\)}, \\
    \|(x, y)\| &= \max\{\|x\|, \|y\|\} \hspace{8 pt} \text{for \(p = \infty\)}.
\end{aligned}
\end{equation*}
\end{proposition}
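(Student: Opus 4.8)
The plan is to mirror the argument used for Theorem~\ref{marginal QS}, since the present statement is essentially the same marginal-function result but with $f$ depending jointly on $(x,y)$ and with $F$ only assumed convex (no expansivity). The new ingredient, and the only conceptual point, is that every $p$-norm on $X\times Y$ dominates the norm of the first coordinate: for all $1\le p\le\infty$ one has $\|(x,y)\|\ge\|x\|$. This will let me convert the quadratic penalty coming from the $\sigma$-quasiconvexity of $f$ on the product space, which involves the full distance $\|(x_1,y_1)-(x_2,y_2)\|$, into a penalty in $\|x_1-x_2\|$ alone, with no loss of the parameter $\sigma$.

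First I would fix $x_1,x_2\in\Omega$ and $\lambda\in(0,1)$, assuming $v(x_i)\in\R$ for $i=1,2$; the required inequality is immediate when $v(x_1)=\infty$ or $v(x_2)=\infty$. For arbitrary $\varepsilon>0$, I would choose near-optimal points $y_i\in F(x_i)$ satisfying $f(x_i,y_i)<v(x_i)+\varepsilon$, which yields $\max\{f(x_1,y_1),f(x_2,y_2)\}<\max\{v(x_1),v(x_2)\}+\varepsilon$.

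Next, using that $\gph(F)$ is convex, the point $\lambda(x_1,y_1)+(1-\lambda)(x_2,y_2)=(\lambda x_1+(1-\lambda)x_2,\;\lambda y_1+(1-\lambda)y_2)$ lies in $\gph(F)$, so $\lambda y_1+(1-\lambda)y_2\in F(\lambda x_1+(1-\lambda)x_2)$. Consequently, by the definition of $v$ as an infimum and the $\sigma$-quasiconvexity of $f$ applied at $(x_1,y_1)$ and $(x_2,y_2)$,
\begin{equation*}
v(\lambda x_1+(1-\lambda)x_2)\le f\big(\lambda(x_1,y_1)+(1-\lambda)(x_2,y_2)\big)\le \max\{f(x_1,y_1),f(x_2,y_2)\}-\tfrac{\sigma}{2}\lambda(1-\lambda)\|(x_1,y_1)-(x_2,y_2)\|^2.
\end{equation*}
Here I would invoke the key inequality $\|(x_1,y_1)-(x_2,y_2)\|\ge\|x_1-x_2\|$ valid for every $p$-norm, so that $-\tfrac{\sigma}{2}\lambda(1-\lambda)\|(x_1,y_1)-(x_2,y_2)\|^2\le -\tfrac{\sigma}{2}\lambda(1-\lambda)\|x_1-x_2\|^2$, and then substitute the $\varepsilon$-optimality bound on the maximum.

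Combining these estimates gives $v(\lambda x_1+(1-\lambda)x_2)\le \max\{v(x_1),v(x_2)\}+\varepsilon-\tfrac{\sigma}{2}\lambda(1-\lambda)\|x_1-x_2\|^2$, and letting $\varepsilon\to 0^+$ yields the $\sigma$-quasiconvexity of $v$. I expect no serious obstacle: the only point requiring care is recognizing that the full product-space penalty must be transferred to a penalty in $\|x_1-x_2\|$, which is precisely where the defining property of the $p$-norm is used, and which also explains why the conclusion holds uniformly over all $p\in[1,\infty]$.
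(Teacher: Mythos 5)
Your proposal is correct and follows essentially the same route as the paper's own proof: choose $\varepsilon$-optimal $y_i\in F(x_i)$, use convexity of $\gph(F)$ to place $\lambda y_1+(1-\lambda)y_2$ in $F(\lambda x_1+(1-\lambda)x_2)$, apply the $\sigma$-quasiconvexity of $f$ on the product space, and pass from $\|(x_1-x_2,y_1-y_2)\|$ to $\|x_1-x_2\|$ via the coordinate bound for the $p$-norm before letting $\varepsilon\to 0^+$. Your explicit handling of the case $v(x_i)=\infty$ is a small tidiness the paper leaves implicit, but the argument is the same.
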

\begin{proof}
Note that regardless of which norm on \(X \times Y\) we use, for all \((x, y) \in X \times Y\), we have
\begin{equation*}
    \|x\| \leq \|(x, y)\|.
\end{equation*}
Fix  each  $x_i \in \Omega$ for $i=1, 2$ and $0<\lambda<1$. Let $y_{i,k} \in F(x_i)$ for $i=1, 2$  be such that
	\[
	  f(x_i, y_{i,k})\to  v(x_i) .
	\]

	Now, consider the point $x_\lambda = \lambda x_1 + (1 - \lambda) x_2 $ and fix any $k\in \N$. Then we have
	\[
	v(x_\lambda) \leq f(x_\lambda, \lambda y_{1,k} + (1 - \lambda) y_{2,k}).
	\]
	since  $\lambda y_{1,k} + (1 - \lambda) y_{2,k} \in F(x_\lambda) $ by the convexity of $F$.
	By the $\sigma$-quasiconvexity of \( f \), we know
	\[
	f(x_\lambda, \lambda y_{1,k} + (1 - \lambda) y_{2,k}) \leq \max\{f(x_1, y_{1,k}), f(x_2, y_{2,k})\} - \lambda(1 - \lambda) \frac{\sigma}{2}  \|(x_1 - x_2, y_{1,k} - y_{2,k})\|^2.
	\]
	Then  we get
	\[
        v(x_\lambda) < \max\{f(x_1, y_{1,k}), f(x_2, y_{2,k})\} - \lambda(1 - \lambda) \frac{\sigma}{2}  \|(x_1 - x_2, y_{1,k} - y_{2,k})\|^2.
        \]
        Therefore, we get
	\[
	v(x_\lambda) < \max\{f(x_1, y_{1,k}), f(x_2, y_{2,k})\} - \lambda(1 - \lambda) \frac{\sigma}{2} \|x_1 - x_2\|^2.
	\]
	
	Finally, letting $k\to \infty$, we conclude
	\[
	v(x_\lambda) \leq \max\{v(x_1), v(x_2)\} - \lambda(1 - \lambda) \frac{\sigma}{2} \|x_1 - x_2\|^2.
	\]
	Thus,  $v$ is $\sigma$-quasiconvex.
\end{proof}

\begin{definition} Let $X, Y$ be normed vector spaces, let $\Omega$ be a nonempty convex subset of $X$,  and let $\Theta$ be a nonempty  subset of $Y$.  A set-valued map $F\colon \Omega\tto\Theta$ is said to be an {\em affine process} if
\begin{equation*}
    F(\lambda x+(1-\lambda)y)\subset  \lambda F(x)+(1-\lambda)F(y)\;\ \mbox{\rm for all }\; x, y\in \Theta \; \mbox{\rm and }0<\lambda<1.
\end{equation*}
\end{definition}

\begin{proposition}  Let $\Omega$ and $\Theta$ be two nonempty convex sets in normed vector spaces $X$ and $Y$, respectively. Assume that \( f\colon \Omega\times \Theta\to \oR \), and \( F\colon  \Omega\tto \Theta \) is a set-valued map. Define the function
\begin{equation}\label{Vmax}
    V(x)=\sup\big\{f(x, y)\; \big|\; y\in F(x)\big\}, \ \; x\in \Omega.
\end{equation}
Suppose that $f$ is $\sigma$-quasiconvex and that $F$ is an affine process such that $\dom(F)=\Omega$. Then the function $V\colon \Omega\to \oR$ defined in \eqref{Vmax} is $\sigma$-quasiconvex. Here we use the same $p$-norm on $X\times Y$ for any $1\leq p\leq \infty$ as in Proposition \ref{marginal min}.
\end{proposition}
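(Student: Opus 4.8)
The plan is to adapt the supremum-over-a-set argument to the $x$-dependent integrand, reversing the direction of approximation relative to Proposition~\ref{marginal min}: since $V$ is now a supremum, I would approach its value from below rather than approach an infimum from above. First I would fix $x_1, x_2 \in \Omega$ and $\lambda \in (0,1)$, set $x_\lambda = \lambda x_1 + (1-\lambda)x_2$, and aim for the defining inequality
\[
V(x_\lambda) \le \max\{V(x_1), V(x_2)\} - \frac{\sigma}{2}\lambda(1-\lambda)\|x_1 - x_2\|^2.
\]

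The core step is to pick an arbitrary $\alpha \in \R$ with $\alpha < V(x_\lambda)$ and, by definition of the supremum, a witness $y \in F(x_\lambda)$ with $\alpha < f(x_\lambda, y)$; such a $y$ exists because $\dom(F) = \Omega$ guarantees $F(x_\lambda) \neq \emptyset$. The affine-process property then gives $y \in F(x_\lambda) \subset \lambda F(x_1) + (1-\lambda)F(x_2)$, so I can write $y = \lambda y_1 + (1-\lambda)y_2$ with $y_i \in F(x_i)$. Expressing the argument as a convex combination in the product space, $(x_\lambda, y) = \lambda(x_1, y_1) + (1-\lambda)(x_2, y_2)$, and invoking the $\sigma$-quasiconvexity of $f$ on the convex set $\Omega \times \Theta$ yields
\[
\alpha < f(x_\lambda, y) \le \max\{f(x_1,y_1), f(x_2,y_2)\} - \frac{\sigma}{2}\lambda(1-\lambda)\|(x_1-x_2, y_1-y_2)\|^2 .
\]
Finally I would bound $f(x_i, y_i) \le V(x_i)$ (as $y_i \in F(x_i)$) and use the $p$-norm estimate $\|x_1 - x_2\| \le \|(x_1-x_2, y_1-y_2)\|$ recorded in Proposition~\ref{marginal min}, then let $\alpha \uparrow V(x_\lambda)$ to recover the displayed inequality.

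I do not expect a genuinely hard step here; the argument is essentially the dual of the infimum case, built from three structural ingredients: the affine-process inclusion to split the witness point, the quasiconvexity of $f$ on the product, and the norm monotonicity $\|x\| \le \|(x,y)\|$ that transfers the penalty term from $X \times Y$ down to $X$. The only points needing care are bookkeeping: verifying that $\dom(F) = \Omega$ keeps the supremum over a nonempty set (so $V$ never equals $-\infty$), and checking the degenerate case $V(x_\lambda) = +\infty$, where (for $x_1 \neq x_2$) letting $\alpha \to +\infty$ in the chain above forces $\max\{V(x_1), V(x_2)\} = +\infty$ as well, making both sides infinite and the inequality trivially valid.
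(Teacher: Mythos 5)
Your proposal is correct and follows essentially the same route as the paper's proof: pick $\alpha < V(x_\lambda)$, extract a witness $y\in F(x_\lambda)$, split it via the affine-process inclusion as $y=\lambda y_1+(1-\lambda)y_2$, apply the $\sigma$-quasiconvexity of $f$ on the product space, and pass to the bound using $\|x_1-x_2\|\le\|(x_1-x_2,y_1-y_2)\|$. Your extra attention to the nonemptiness of $F(x_\lambda)$ and the case $V(x_\lambda)=+\infty$ is a minor refinement the paper leaves implicit.
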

\begin{proof} Take any $x_1, x_2\in \Omega$ and $0<\lambda<1$. Fix any $\alpha\in \R$ such that $\alpha <V(\lambda x_1+(1-\lambda)x_2)$. By definition, there exists $y\in F(\lambda x_1+(1-\lambda)x_2)$ such that $\alpha<f(\lambda x_1+(1-\lambda)x_2, y)$.
Since $F$ is an affine process, we see that
\begin{equation*}
    y\in F(\lambda x_1+(1-\lambda)x_2)\subset \lambda F(x_1)+(1-\lambda)F(x_2).
\end{equation*}
Then $y=\lambda y_1+(1-\lambda)y_2$ for some $y_1\in F(x_1)$ and $y_2\in F(x_2)$. It follows that
\begin{equation*}
\begin{aligned}
    \alpha&=f(\lambda x_1+(1-\lambda)x_2, y)\\
    &=f(\lambda(x_1, y_2)+(1-\lambda)(x_2, y_2)) \\
    &\leq \max\{f(x_1, y_1), f(x_2, y_2)\}-\frac{\sigma}{2}\lambda(1-\lambda)\|(x_1, y_1)-(x_2, y_2)\|^2\\
    &\leq \max\{V(x_1), V(x_2)\}-\frac{\sigma}{2}\lambda(1-\lambda)\|x_1-x_2\|^2.
    \end{aligned}
\end{equation*}
This implies that
\begin{equation*}
    V(\lambda x_1+(1-\lambda)x_2)\leq \max\{V(x_1), V(x_2)\}-\frac{\sigma}{2}\lambda(1-\lambda)\|x_1-x_2\|^2.
\end{equation*}
Therefore, $V$ is $\sigma$-quasiconvex. 
\end{proof}

\section{Strong Quasiconvexity of Norms}

 Let us first provide an obvious extension \cite[Theorem~2]{J} to inner product spaces. We include its detailed proof for the convenience of the readers. 
\begin{proposition}\label{HilbertSQCnorm} Let $H$ be a real inner product space. Then the normed function $f(x)=\|x\|$, $x\in H$, is strongly quasiconvex on any nonempty bounded convex subset $\Omega$ of $H$.     
 \end{proposition}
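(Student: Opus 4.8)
The plan is to exploit the inner-product structure, which lets me compute the square of the norm along the segment exactly. First I would record the identity, valid in any real inner product space,
\[
\|\lambda x+(1-\lambda)y\|^2=\lambda\|x\|^2+(1-\lambda)\|y\|^2-\lambda(1-\lambda)\|x-y\|^2,
\]
which follows by expanding the left-hand side using $\langle\cdot,\cdot\rangle$ and simplifying. Setting $M:=\max\{\|x\|,\|y\|\}$ and using $\lambda\|x\|^2+(1-\lambda)\|y\|^2\le M^2$, this immediately gives the squared estimate
\[
\|\lambda x+(1-\lambda)y\|^2\le M^2-\lambda(1-\lambda)\|x-y\|^2.
\]

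The next step is to pass from this squared inequality to a first-order inequality on the norm itself. Writing $a:=\lambda(1-\lambda)\|x-y\|^2\ge0$ and assuming for the moment that $M>0$, I would invoke the elementary bound $\sqrt{M^2-a}\le M-\frac{a}{2M}$. This is justified by observing that $M-\frac{a}{2M}\ge0$ (since $a\le M^2$) and that squaring it yields $M^2-a+\frac{a^2}{4M^2}\ge M^2-a$; taking square roots in the displayed estimate then gives
\[
\|\lambda x+(1-\lambda)y\|\le M-\frac{\lambda(1-\lambda)\|x-y\|^2}{2M}.
\]

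Here is where the boundedness of $\Omega$ enters, and it is the crucial ingredient: without it the coefficient $1/(2M)$ cannot be bounded away from $0$, and in fact the norm fails to be strongly quasiconvex on an unbounded set (it is then merely convex, hence quasiconvex). Let $R:=\sup_{w\in\Omega}\|w\|<\infty$. For any $x,y\in\Omega$ we have $M\le R$, and since the subtracted term is nonnegative, $M-\frac{a}{2M}\le M-\frac{a}{2R}$. This yields
\[
\|\lambda x+(1-\lambda)y\|\le\max\{\|x\|,\|y\|\}-\frac{1}{2R}\,\lambda(1-\lambda)\|x-y\|^2,
\]
which is exactly $\sigma$-strong quasiconvexity with $\sigma=1/R>0$.

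Finally I would dispose of the degenerate case $M=0$, i.e. $x=y=0$: then $\|x-y\|=0$ and the defining inequality holds trivially, and the same remark handles the case $R=0$ in which $\Omega$ reduces to a single point. The only genuine obstacle is the clean transfer from the squared estimate to the norm estimate while retaining a uniform positive constant, and the boundedness hypothesis is precisely what makes this possible.
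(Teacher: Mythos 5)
Your proof is correct, and it takes a genuinely different route from the paper's. The paper reduces the statement to dimension one: it fixes $x,u\in\Omega$, restricts $f$ to the segment via Proposition~\ref{DSQC}, writes the restriction explicitly as $\varphi(t)=\sqrt{(t+a)^2+c}$ (or $|t+a|$ in the degenerate case), and then invokes the one-dimensional examples of Jovanovi\'c \cite{J} to get a uniform parameter $\sigma=1/r$ on $\Omega\subset\B(0;r)$. You instead work directly in $H$: the exact identity
\[
\|\lambda x+(1-\lambda)y\|^2=\lambda\|x\|^2+(1-\lambda)\|y\|^2-\lambda(1-\lambda)\|x-y\|^2
\]
(i.e., the $2$-strong convexity of $\|\cdot\|^2$) gives the squared estimate at once, and the elementary bound $\sqrt{M^2-a}\le M-\frac{a}{2M}$ (valid here since $a=\lambda(1-\lambda)\|x-y\|^2\le M^2$) converts it into the first-order inequality with constant $\frac{\sigma}{2}=\frac{1}{2R}$, $R=\sup_{w\in\Omega}\|w\|$. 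Both arguments yield essentially the same parameter ($1/r$ versus $1/R$), but yours is self-contained, avoids the external citation and the case split on $b^2-a$, and is closer in spirit to the modulus-of-convexity machinery the paper develops later for general uniformly convex norms (compare Lemmas~\ref{NormLemma1} and \ref{NormLemma2}, where the same rescaling by $\max\{\|x\|,\|y\|\}$ appears). Your handling of the degenerate cases $M=0$ and $R=0$ is also complete.
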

\begin{proof}
 Suppose that $\Omega\subset \B(0; r)$. Take any $x, u\in \Omega$ with $x\neq u$. It suffices to show that the function
 \begin{equation*}
     \ph(t)=f(u+\frac{t}{\|x-u\|}(x-u))
 \end{equation*}
is \(\sigma\)-strongly quasiconvex on $[0, \|x-u\|]$, where \(\sigma > 0\) is independent of \(x\) and \(u\). We then have
\begin{equation*}
\begin{aligned}
    \ph(t)&=\sqrt{\Big\|u+\frac{t}{\|x-u\|}(x-u)\Big\|^2}=\sqrt{t^2+2\frac{t}{\|x-u\|}\la u, x-u\ra+\|u\|^2}\\
    &=\sqrt{\Big(t+\frac{\la u, x-u\ra}{\|x-u\|}\Big)^2+\|u\|^2-\frac{\la u, x-u\ra^2}{\|x-u\|^2}}.
    \end{aligned}
\end{equation*}
Let \(a = \frac{\la u, x-u\ra}{\|x-u\|}\) and \(b = \|u\|^2\), then
\begin{equation*}
    \ph(t) = \sqrt{(t + a)^2 + b^2 - a}.
\end{equation*}
Note that \(b^2 - a \geq 0\) by the Cauchy-Schwartz inequality. If \(b^2 - a > 0\), then by \cite[Example 1]{J}, \(\ph\) is \(\sigma^1_{u,x}\)-strongly quasiconvex on \([0, \|x - u\|]\), where \(\sigma^1_{u,x}\) is
\begin{equation*}
    \sigma^1_{u,x} = \frac{1}{\max\{\ph(0), \ph(\|x - u\|)\}} = \frac{1}{\max\{\|u\|, \|x\|\}} \geq \frac{1}{r}.
\end{equation*}
If \(b^2 - a = 0\), then \(\ph(t) = |t + a|\) and by \cite[Example 2]{J}, \(\ph\) is \(\sigma^2_{u,x}\)-strongly quasiconvex on \([0, \|x - u\|]\), where \(\sigma^2_{u,x}\) is
\begin{equation*}
    \sigma^2_{u,x} = \frac{2}{\|u - x\|} \geq \frac{1}{r}.
\end{equation*}
Set \(\sigma = \frac{1}{r}\), then \(\ph\) is \(\sigma\)-strongly quasiconvex on \([0, \|x - u\|]\). By Proposition~\ref{DSQC}, \(f\) is \(\sigma\)-strongly quasiconvex on \(\Omega\).
\end{proof}
The remainder of this section is devoted to presenting a necessary and sufficient condition on the norm of a Banach space for the strong quasiconvexity of the norm function on bounded convex sets. For this purpose we distinguish between the cases \(\dim(X) \geq 2\) and the trivial case \(\dim(X) = 1\). If \(\dim(X) = 1\) then we may assume that \(X\) is \(\mathbb{R}\). A straightforward argument shows that there must be a positive scalar multiple \(c\) such that \(\|x\| = c|x|\). Endowing \(X\) with the inner product \(\langle x, y \rangle = c^2xy\) yields \(\| \cdot \|\) as the corresponding norm. Therefore, \(X\) is a Hilbert space, at which point Proposition~\ref{HilbertSQCnorm} may be applied to show that the norm of \(X\) is strongly quasiconvex on bounded subsets.

For the rest of this section, assume that \(\dim(X) \geq 2\).
\begin{definition}
Let \(X\) be a Banach space. Define the modulus of convexity \(\delta\colon [0, 2] \rightarrow \mathbb{R}\) by
\begin{equation*}
    \delta(\epsilon) = \inf\left\{1 - \left\|\frac{x + y}{2}\right\|: \|x\| = \|y\| = 1, \|x - y\| \geq \epsilon\right\}.
\end{equation*}
\end{definition}
Equivalently, we may instead write
\begin{equation*}
    \delta(\epsilon) = \inf\left\{1 - \left\|\frac{x + y}{2}\right\|: \|x\|, \|y\| \leq 1, \|x - y\| \geq \epsilon\right\}.
\end{equation*}
For a detailed proof of this equivalence, see \cite[p. 60]{LT79}.

\begin{theorem}\label{NC}
Let \(X\) be a Banach space and \(\| \cdot \|\) be the corresponding norm. If for all nonempty bounded convex sets \(\Omega\) there exists \(\sigma > 0\) such that \(f(x) = \|x\|\) is \(\sigma\)-strongly quasiconvex on \(\Omega\), then \(\| \cdot \|\) is uniformly convex with modulus of convexity \(\delta(\epsilon) \geq \frac{\sigma_1\epsilon^2}{8}\), where \(\sigma_1 > 0\) is any value such that \(f\) is \(\sigma_1\)-strongly quasiconvex on the closed unit ball.
\end{theorem}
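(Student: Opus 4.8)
The plan is to observe that the only role of the global hypothesis is to supply, for the particular bounded convex set $\Omega=\Bar{\B}(0;1)$, a constant $\sigma_1>0$ for which $f(x)=\|x\|$ is $\sigma_1$-strongly quasiconvex on the closed unit ball. Once such a $\sigma_1$ is fixed, everything reduces to specializing the defining inequality of $\sigma_1$-strong quasiconvexity to points of the unit sphere at the midpoint parameter $\lambda=\tfrac12$, and then reading off the modulus of convexity directly. No compactness, no auxiliary constructions, and no passage through other bounded sets are needed; the whole content is a single substitution.

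Concretely, I would take arbitrary $x,y\in X$ with $\|x\|=\|y\|=1$ and $\|x-y\|\ge\epsilon$. These lie in $\Bar{\B}(0;1)$, so the $\sigma_1$-strong quasiconvexity of $f$ applies to them. Using the definition with $\lambda=\tfrac12$ gives
\begin{equation*}
\Big\|\tfrac{x+y}{2}\Big\|=f\big(\tfrac12 x+\tfrac12 y\big)\le \max\{\|x\|,\|y\|\}-\frac{\sigma_1}{2}\cdot\frac12\cdot\frac12\,\|x-y\|^2=1-\frac{\sigma_1}{8}\|x-y\|^2,
\end{equation*}
and since $\|x-y\|^2\ge\epsilon^2$ this yields $\big\|\tfrac{x+y}{2}\big\|\le 1-\frac{\sigma_1}{8}\epsilon^2$, i.e. $1-\big\|\tfrac{x+y}{2}\big\|\ge\frac{\sigma_1}{8}\epsilon^2$. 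Taking the infimum over all admissible pairs $x,y$ produces exactly $\delta(\epsilon)\ge\frac{\sigma_1\epsilon^2}{8}$.

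Finally I would note that this lower bound is positive for every $\epsilon\in(0,2]$, which is precisely the characterization of uniform convexity, so $\|\cdot\|$ is uniformly convex. I do not anticipate a genuine obstacle here: the argument is a direct unwinding of definitions. The only points meriting care are (i) confirming that $\max\{\|x\|,\|y\|\}=1$ on the sphere so that the bound is clean, and (ii) making explicit that the hypothesis is invoked solely to guarantee the existence of some $\sigma_1>0$ on $\Bar{\B}(0;1)$, after which the stated estimate holds verbatim for any such $\sigma_1$. One could, if desired, phrase the same computation using the equivalent ball-version of $\delta$ recorded just before the statement, but the sphere version already suffices.
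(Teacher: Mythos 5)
Your proposal is correct and follows essentially the same route as the paper's proof: fix $\sigma_1$ on the closed unit ball, specialize the strong quasiconvexity inequality to unit vectors at $\lambda=\tfrac12$ to get $\bigl\|\tfrac{x+y}{2}\bigr\|\le 1-\tfrac{\sigma_1}{8}\epsilon^2$, and read off both uniform convexity and the bound $\delta(\epsilon)\ge\tfrac{\sigma_1\epsilon^2}{8}$. No substantive differences.
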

\begin{proof}
Let \(\sigma_1 > 0\) such that \(f\) is \(\sigma_1\)-strongly quasiconvex on the closed unit ball. Let \(0 < \epsilon \leq 2\) and \(x, y \in X\) such that \(\|x\| = \|y\| = 1\) and \(\|x - y\| \geq \epsilon\). Then
\begin{equation*}
    \left\|\frac{x + y}{2}\right\| \leq \max\{\|x\|, \|y\|\} - \frac{\sigma_1}{8}\|x - y\|^2 \leq 1 - \frac{\sigma_1}{8}\epsilon^2.
\end{equation*}
Set \(\delta = \frac{\sigma_1}{8}\epsilon^2\), then \(\left\|\frac{x + y}{2}\right\| \leq 1 - \delta\) so that \(\| \cdot \|\) is uniformly convex. In addition, we have
\begin{equation*}
    \frac{\sigma_1}{8}\epsilon^2 \leq 1 - \left\|\frac{x + y}{2}\right\|.
\end{equation*}
Therefore, \(\delta(\epsilon) \geq \frac{\sigma_1\epsilon^2}{8}\), which completes the proof. 
\end{proof}

Note that we can clearly weaken the assumption to be just that \(f\) is strongly quasiconvex on the closed unit ball, instead of every bounded convex set.

Next, we consider the following assumption:

{\bf Assumption (A)}:
\emph{\( X \) is a uniformly convex Banach space and the modulus of convexity \( \delta(\epsilon) \) satisfies the inequality}
\begin{equation*}
    \delta(\epsilon) \geq \frac{\sigma}{8}\epsilon^2 \quad \text{\em for all } \epsilon \in [0, 2].
\end{equation*}

To prove the main result of this section, we need two lemmas. The first lemma states that our assumptions guarantee that \(f\) is \(\sigma\)-strongly quasiconvex over the unit ball specifically for \(\lambda = \frac{1}{2}\).

\begin{lemma}\label{NormLemma1}
Under Assumption {\rm (A)}, for all \(x, y \in X\) such that \(\|x\|, \|y\| \leq 1\), we have
\begin{equation*}
    \left\|\frac{x + y}{2}\right\| \leq \max\{\|x\|, \|y\|\} - \frac{\sigma}{8}\|x - y\|^2.
\end{equation*}
\end{lemma}
\begin{proof} Indeed, first note that Assumption (A) implies that for all \(x, y \in X\) such that \(\|x\|, \|y\| \leq 1\), we have
\begin{equation*}
    \left\|\frac{x + y}{2}\right\| \leq 1 - \frac{\sigma}{8}\|x - y\|^2.
\end{equation*}
Suppose without loss of generality that \(x\neq 0\) or \(y\neq 0\) so that \(\max\{\|x\|, \|y\|\} > 0\). Let \(c = \max\{\|x\|, \|y\|\}\). Clearly, we have
\begin{equation*}
\begin{aligned}
    \left\|\frac{x}{c}\right\|\leq 1, \; \left\|\frac{y}{c}\right\| &\leq 1, \; \mbox{\rm and }    c \leq 1.
\end{aligned}
\end{equation*}
Therefore,
\begin{equation*}
\begin{aligned}
    \left\|\frac{x + y}{2}\right\| &=c\left\|\frac{x + y}{2c}\right\| \\
    &\leq c\left(1 - \frac{\sigma}{8}\left\|\frac{x - y}{c}\right\|^2\right) \\
    &= c - \frac{\sigma}{8c}\left\|x - y\right\|^2 \\
    &\leq \max\{\|x\|, \|y\|\} - \frac{\sigma}{8}\left\|x - y\right\|^2,
\end{aligned}
\end{equation*}
which completes the proof.
\end{proof}

The second lemma states that if \(f\) satisfies the inequality in the definition of \(\sigma\)-strong quasiconvexity on a set \(\Omega\) for \(\lambda = \frac{1}{2}\), where $\sigma>0$, then it is \(\frac{\sigma}{2}\)-strongly quasiconvex on this set.
\begin{lemma}\label{NormLemma2}
Let \(X\) be a normed vector space, \(\| \cdot \|\) be its corresponding norm, and let \(\Omega \subset X\) be a nonempty convex set. Suppose for any \(x, y \in \Omega\) we have
\begin{equation*}
    \left\|\frac{x + y}{2}\right\| \leq \max\{\|x\|, \|y\|\} - \frac{\sigma}{8}\|x - y\|^2.
\end{equation*}
Then \(f(x) = \|x\|\) is \(\frac{\sigma}{2}\)-strongly quasiconvex on \(\Omega\). That is, for any \(\lambda \in [0, 1]\) and \(x, y \in \Omega\), we have
\begin{equation*}
    \|\lambda x + (1 - \lambda)y\| \leq \max\{\|x\|, \|y\|\} - \frac{\sigma}{4}\lambda(1 -\lambda)\|x - y\|^2.
\end{equation*}
\end{lemma}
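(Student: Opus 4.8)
The plan is to exploit a feature special to $f=\|\cdot\|$ that is not available for general $\sigma$-quasiconvex functions: along any segment the norm restricts to a \emph{convex} function of the parameter. Precisely, for fixed $x,y\in\Omega$ the map
\[
\psi(\lambda)=\|\lambda x+(1-\lambda)y\|,\qquad \lambda\in[0,1],
\]
is convex, being the composition of $\|\cdot\|$ with the affine map $\lambda\mapsto\lambda x+(1-\lambda)y$. This convexity is the crux of the argument. It lets me avoid the tempting route of iterating the midpoint inequality over dyadic rationals $\lambda=k/2^n$: that approach looks natural but breaks down near the endpoints, because the $\max\{\|x\|,\|y\|\}$ on the right prevents the per-step quadratic gain $\tfrac{\sigma}{8}(\cdot)^2$ from accumulating into the \emph{linear}-order control in $\lambda$ that the conclusion requires there.

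Concretely, I would fix $x,y\in\Omega$, set $M=\max\{\|x\|,\|y\|\}$ and $D=\|x-y\|^2$, and invoke the hypothesis for the single pair $(x,y)$ to obtain the one midpoint estimate $\psi(\tfrac12)\le M-\tfrac{\sigma}{8}D$. Together with the trivial bounds $\psi(0)=\|y\|\le M$ and $\psi(1)=\|x\|\le M$, this supplies the three values of $\psi$ needed to run chord estimates from convexity. No further use of the hypothesis is required.

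For $\lambda\in[0,\tfrac12]$ I would bound $\psi$ by its chord over $[0,\tfrac12]$: writing $\lambda=(1-2\lambda)\cdot 0+2\lambda\cdot\tfrac12$ with nonnegative weights, convexity gives
\[
\psi(\lambda)\le(1-2\lambda)\psi(0)+2\lambda\,\psi(\tfrac12)\le M-\tfrac{\sigma}{4}\lambda D\le M-\tfrac{\sigma}{4}\lambda(1-\lambda)D,
\]
where the final inequality only uses $1-\lambda\le1$. Symmetrically, for $\lambda\in[\tfrac12,1]$ the chord over $[\tfrac12,1]$ yields $\psi(\lambda)\le M-\tfrac{\sigma}{4}(1-\lambda)D\le M-\tfrac{\sigma}{4}\lambda(1-\lambda)D$. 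These two cases cover all $\lambda\in[0,1]$ and give exactly
\[
\|\lambda x+(1-\lambda)y\|\le\max\{\|x\|,\|y\|\}-\tfrac{\sigma}{4}\lambda(1-\lambda)\|x-y\|^2,
\]
which is the asserted $\tfrac{\sigma}{2}$-strong quasiconvexity (the degenerate case $x=y$, where $D=0$, is trivial).

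The real content is the single observation that the hypothesis need only be used once, at the midpoint, with convexity doing all the remaining work; there is essentially no computational obstacle beyond the two elementary chord estimates. I would also remark that the factor-of-two loss—ending at parameter $\tfrac{\sigma}{2}$ rather than $\sigma$—is intrinsic to this route, since the two one-sided linear bounds $\tfrac{\sigma}{4}\lambda D$ and $\tfrac{\sigma}{4}(1-\lambda)D$ are combined only through the inequality $\lambda(1-\lambda)\le\min\{\lambda,1-\lambda\}$, rather than by reconstructing the sharp quadratic profile in $\lambda$.
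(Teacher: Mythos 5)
Your proposal is correct and is essentially the paper's proof in different clothing: your chord estimate $\psi(\lambda)\le(1-2\lambda)\psi(0)+2\lambda\psi(\tfrac12)$ unwinds, via $\psi(\tfrac12)=\bigl\|\tfrac{x+y}{2}\bigr\|$ and $\psi(0)=\|y\|$, to exactly the triangle-inequality decomposition $\|\lambda x+(1-\lambda)y\|\le 2\lambda\bigl\|\tfrac{x+y}{2}\bigr\|+(1-2\lambda)\|y\|$ that the paper uses for $\lambda\le\tfrac12$, followed by the same midpoint hypothesis and the same bound $\lambda(1-\lambda)\le\min\{\lambda,1-\lambda\}$. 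No gap; the two arguments coincide step for step.
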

\begin{proof}
Fix \(x, y \in \Omega\) and let \(\lambda \in (0, 1)\). First assume that $\lambda \leq \frac{1}{2}$. Then we have
\begin{equation*}
\begin{aligned}
    \|\lambda x + (1 - \lambda)y\| &\leq \left\|\lambda x + \lambda y\right\| + (1 - 2\lambda)\|y\| \\
    &= 2\lambda\left\|\frac{x + y}{2}\right\| + (1 - 2\lambda)\|y\| \\
    &\leq 2\lambda \max\{\|x\|, \|y\|\} - 2\lambda \frac{\sigma}{8}\|x - y\|^2 + (1 - 2\lambda)\max\{\|x\|, \|y\|\} \\
    &= \max\{\|x\|, \|y\|\} - \frac{2\lambda}{\lambda(1 - \lambda)} \frac{\sigma}{8}\lambda(1 - \lambda)\|x - y\|^2 \\
    &= \max\{\|x\|, \|y\|\} - \frac{1}{1 - \lambda} \frac{\sigma}{4}\lambda(1 - \lambda)\|x - y\|^2 \\
    &\leq \max\{\|x\|, \|y\|\} - \frac{\sigma}{4}\lambda(1 - \lambda)\|x - y\|^2.
\end{aligned}
\end{equation*}
It follows that
\begin{equation*}
    \|\lambda x + (1 - \lambda)y\| \leq \max\{\|x\|, \|y\|\} - \frac{\sigma}{4}\lambda(1 -\lambda)\|x - y\|^2.
\end{equation*}
The above inequality similarly holds for \(\frac{1}{2} < \lambda < 1\) as well. The cases \(\lambda = 0\) and \(\lambda = 1\) are trivial, and so the proof is complete.
\end{proof}
We are now ready to state and prove the main result.
\begin{theorem}\label{StrongQuasiNorm}
Let \(X\) be a normed vector space, and let \(\| \cdot \|\) be its corresponding norm. Under Assumption {\rm (A)}, the norm function \(f(x) = \|x\|\) is a strongly quasiconvex function on every nonempty bounded convex set \(\Omega \subset X\). In particular, \(f\) is strongly quasiconvex with parameter \(\frac{\sigma}{2}\) on the closed unit ball.
\end{theorem}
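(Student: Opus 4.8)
The plan is to deduce the theorem directly from the two preceding lemmas, treating the closed unit ball first and then reducing the general bounded convex case to it by a simple scaling argument.

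For the unit ball, Lemma~\ref{NormLemma1} supplies, under Assumption (A), exactly the midpoint inequality
\[
\left\|\frac{x+y}{2}\right\| \leq \max\{\|x\|,\|y\|\} - \frac{\sigma}{8}\|x-y\|^2
\]
for all \(x,y\) with \(\|x\|,\|y\|\leq 1\). This is precisely the hypothesis demanded by Lemma~\ref{NormLemma2} with \(\Omega = \Bar{\B}(0;1)\). Applying that lemma at once yields that \(f(x)=\|x\|\) is \(\frac{\sigma}{2}\)-strongly quasiconvex on the closed unit ball, which establishes the ``in particular'' assertion.

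For a general nonempty bounded convex set \(\Omega\), I would first invoke boundedness to pick \(r>0\) with \(\Omega \subset \Bar{\B}(0;r)\). Given \(x,y\in\Omega\), the rescaled points \(x/r,\,y/r\) lie in the closed unit ball, so Lemma~\ref{NormLemma1} applies to them; using the positive homogeneity of the norm and clearing the common factor \(1/r\) (the quadratic term carries \(1/r^2\) while the norms carry \(1/r\)), I expect to obtain
\[
\left\|\frac{x+y}{2}\right\| \leq \max\{\|x\|,\|y\|\} - \frac{\sigma}{8r}\|x-y\|^2 .
\]
This is the midpoint inequality of Lemma~\ref{NormLemma2} on \(\Omega\) with \(\sigma\) replaced by \(\sigma/r>0\). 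A second application of Lemma~\ref{NormLemma2} then gives that \(f\) is \(\frac{\sigma}{2r}\)-strongly quasiconvex on \(\Omega\), and since \(\frac{\sigma}{2r}>0\) this is the desired strong quasiconvexity on every bounded convex set.

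I do not anticipate a genuinely hard step: the substantive content has been absorbed into the two lemmas, and the only point requiring care is the bookkeeping in the scaling step, where the strong-quasiconvexity parameter degrades from \(\frac{\sigma}{2}\) on the unit ball to \(\frac{\sigma}{2r}\) on a set of radius \(r\). This degradation is harmless, because the conclusion only requires the existence of \emph{some} strictly positive parameter on each fixed bounded set rather than a single uniform one across all of \(X\).
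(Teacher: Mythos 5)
Your proposal is correct and follows essentially the same route as the paper: both establish the $\frac{\sigma}{2}$-strong quasiconvexity on the closed unit ball via Lemma~\ref{NormLemma1} followed by Lemma~\ref{NormLemma2}, and then handle a general bounded convex set by rescaling through positive homogeneity, obtaining a parameter that degrades like $1/r$ with the radius. The only cosmetic difference is that the paper scales the final $\lambda$-combination inequality directly, whereas you scale the midpoint inequality and invoke Lemma~\ref{NormLemma2} a second time; the resulting constants agree.
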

\begin{proof} 
By Lemma~\ref{NormLemma1}, for any \(x, y \in X\) such that \(\|x\|, \|y\| \leq 1\), we have
\begin{equation*}
    \left\|\frac{x + y}{2}\right\| \leq \max\{\|x\|, \|y\|\} - \frac{\sigma}{8}\|x - y\|^2.
\end{equation*}
Using Lemma~\ref{NormLemma2}, we see that \(f\) is \(\frac{\sigma}{2}\)-strongly quasiconvex on the unit ball. Now, let \(\Omega \subset X\) be any nonempty bounded convex set. Then there exists \(c > 0\) such that \(\|x\| \leq c\) for all \(x \in \Omega\). Take any \(x, y \in \Omega\) and $0<\lambda<1$. Then \(\|x\|\leq c,\, \|y\| \leq c\), and hence
\begin{equation*}
\begin{aligned}
    \left\|\lambda x + (1 - \lambda)y\right\| &= c\left\|\lambda \frac{x}{c} + (1 - \lambda)\frac{y}{c}\right\| \\
    &\leq c\left(\max\left\{\frac{\|x\|}{c}, \frac{\|y\|}{c}\right\} - \frac{\sigma}{4}\lambda(1 - \lambda)\left\|\frac{x - y}{c}\right\|^2\right) \\
    &= \max\{\|x\|, \|y\|\} - \frac{\sigma}{4c}\lambda(1 - \lambda)\left\|x - y\right\|^2.
\end{aligned}
\end{equation*}
Hence, \(f\) is \(\frac{\sigma}{2c}\)-strongly quasiconvex on \(\Omega\). We conclude that \(f\) is strongly quasiconvex on  \(\Omega\). This completes the proof. 
\end{proof}

\begin{example} Let $H$ be a Hilbert space. We can check that 
\begin{equation*}
    \delta(\epsilon)=1-\sqrt{1-\frac{\epsilon^2}{4}}=\frac{\frac{\epsilon^2}{4}}{1+\sqrt{1-\frac{\epsilon^2}{4}}}\geq \frac{\epsilon^2}{8}\;\ \mbox{\rm for }\; \epsilon\in [0, 2].
\end{equation*}
Therefore, Assumption (A) is satisfied. Therefore, Theorem~\ref{StrongQuasiNorm} provides an alternative proof for the strong quasiconvexity of the norm functions on nonempty bounded convex sets in Hilbert spaces.     
\end{example}

\begin{example}
Theorem~\ref{StrongQuasiNorm} allows for a complete characterization of which \(\ell^p\) and \(L^p([0, 1])\) norms for \(1 < p < \infty\) are strongly quasiconvex on nonempty bounded convex sets. 

Indeed, in the case of \(L^p([0, 1])\), where $1<p\leq 2$, we have $\delta(\epsilon)\geq \frac{p-1}{8}\epsilon^2$; see \cite[Corollary~1]{Meir1984}. By Theorem~\ref{StrongQuasiNorm}, the norm function in \(L^p([0, 1])\) is strongly quasiconvex on nonempty bounded convex sets. 

Now, consider the case where $p>2$. It is well-known that (see, e.g., \cite[Theorem 2]{Hanner55})
\begin{equation*}
    \delta(\epsilon)=1-\left(1-\big(\frac{\epsilon}{2}\big)^p\right)^\frac{1}{p} \;\ \mbox{\rm for }\; \epsilon\in [0, 2].
\end{equation*}
In this case, we can check that
\begin{equation*}
    \lim_{\epsilon\to 0^+}\frac{\delta(\epsilon)}{\epsilon^2}=0.
\end{equation*}
Thus, Assumption (A) is not satisfied, and hence by Theorem~\ref{NC} the norm function is not strongly quasiconvex on the closed unit ball of the space.

\end{example}
\begin{corollary} Let \(X\) be a normed vector space, and let \(\| \cdot \|\) be its corresponding norm under Assumption {\rm (A)}. Then the function \(\| \cdot \|^a\), where $0<a<1$, is strongly quasiconvex on nonempty bounded convex sets.
\end{corollary}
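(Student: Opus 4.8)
The plan is to reduce the claim to the strong quasiconvexity of the norm itself, already established in Theorem~\ref{StrongQuasiNorm}, and then to absorb the outer power map $t\mapsto t^a$ by a single one-variable estimate. Fix a nonempty bounded convex set $\Omega$ and choose $R>0$ with $\|z\|\le R$ for all $z\in\Omega$ (we may assume $\Omega\neq\{0\}$, so $R>0$). By Theorem~\ref{StrongQuasiNorm}, Assumption~(A) guarantees a constant $\sigma'>0$ such that $f(z)=\|z\|$ is $\sigma'$-strongly quasiconvex on $\Omega$. Writing $M=\max\{\|x\|,\|y\|\}$ and $\delta=\tfrac{\sigma'}{2}\lambda(1-\lambda)\|x-y\|^2$ for $x,y\in\Omega$ and $0<\lambda<1$, this reads $\|\lambda x+(1-\lambda)y\|\le M-\delta$. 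Since the left-hand side is nonnegative, $M-\delta\ge 0$, so the whole argument stays inside $[0,\infty)$.

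The key auxiliary fact I would isolate is the elementary inequality: for $0<a<1$ and $0\le s\le M\le R$,
\[
    M^a-s^a\ge aR^{a-1}(M-s).
\]
This follows from the concavity of $\phi(t)=t^a$: because $\phi'(t)=at^{a-1}$ is nonincreasing on $(0,\infty)$, one has $M^a-s^a=\int_s^M\phi'(t)\,dt\ge \phi'(M)(M-s)=aM^{a-1}(M-s)$, and $M^{a-1}\ge R^{a-1}$ since $a-1<0$ and $M\le R$. (Equivalently, one invokes the mean value theorem on $[s,M]$ and bounds the intermediate point.)

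Combining the two ingredients finishes the proof. Using that $t\mapsto t^a$ is increasing together with $\|\lambda x+(1-\lambda)y\|\le M-\delta$, and then applying the displayed inequality with $s=M-\delta$, gives
\[
    \|\lambda x+(1-\lambda)y\|^a\le (M-\delta)^a\le M^a-aR^{a-1}\delta.
\]
Since $M^a=\max\{\|x\|^a,\|y\|^a\}$ by monotonicity, substituting the value of $\delta$ shows that $\|\cdot\|^a$ is strongly quasiconvex on $\Omega$ with parameter $\sigma''=aR^{a-1}\sigma'>0$. The case $x=y$ is trivial, since the inequality then reduces to an equality, and $x\neq y$ forces $M>0$, so the estimate is never applied at $M=0$.

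The main obstacle, and the reason boundedness of $\Omega$ is indispensable, is securing a \emph{uniform} positive parameter $\sigma''$. The local concavity strength of $t^a$ is governed by $\phi'(M)=aM^{a-1}$, which degenerates to $0$ as $M\to\infty$; only the a priori bound $M\le R$ keeps $aM^{a-1}\ge aR^{a-1}>0$ and hence produces a single $\sigma''$ valid for all $x,y\in\Omega$. On an unbounded convex set this mechanism breaks down, so the restriction to bounded sets is essential rather than merely convenient. Note also that composing through Proposition~2 would not work directly, since $t\mapsto t^a$ is only $0$-quasiconvex (being monotone) and would yield plain quasiconvexity; the quadratic gain must instead be extracted from the norm's strong quasiconvexity, as above.
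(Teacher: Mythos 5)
Your proof is correct and follows essentially the same route as the paper's: both deduce the result from Theorem~\ref{StrongQuasiNorm} and then apply the mean value theorem (equivalently, concavity of $t\mapsto t^a$) to bound $M^a-s^a$ below by $aR^{a-1}(M-s)$ on $[0,R]$, obtaining the parameter $aR^{a-1}\sigma'$. The only cosmetic difference is that your single uniform one-variable inequality absorbs the case $\|\lambda x+(1-\lambda)y\|=0$, which the paper treats separately.
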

\begin{proof}
Let \(\Omega \subset X\) be a nonempty bounded convex set. Let \(r > 0\) such that \(\Omega \subset B(0; r)\), \(x, y \in \Omega\) such that \(x \neq y\) and \(\|x\| \leq \|y\|\),  and \(\lambda \in (0, 1)\). Our assumptions imply that
\begin{equation*}
    \|\lambda x + (1 - \lambda)y\| < \|y\|.
\end{equation*}
If \(\|\lambda x + (1 - \lambda)y\| > 0\), then by the mean value theorem there exists \(c \in (0, r)\) such that
\begin{equation*}
\begin{aligned}
    \|y\|^a - \|\lambda x + (1 - \lambda)y\|^a &= ac^{a - 1}(\|y\| - \|\lambda x + (1 - \lambda)y\|) \\
    &\geq ar^{a - 1}(\|y\| - \|\lambda x + (1 - \lambda)y\|).
\end{aligned}
\end{equation*}
Furthermore,  \(\| \cdot \|\) is strongly quasiconvex on \(\Omega\) by theorem~\ref{StrongQuasiNorm}. Take \(\sigma > 0\) such that
\begin{equation*}
    \|y\| - \|\lambda x + (1 - \lambda)y\| \geq \frac{\sigma}{2} \lambda(1 - \lambda)\|x - y\|^2.
\end{equation*}
Therefore,
\begin{equation*}
    \|\lambda x + (1 - \lambda)y\|^a \leq \max\{\|x\|^a, \|y\|^a\} -a r^{a - 1}\frac{\sigma}{2} \lambda(1 - \lambda)\|x - y\|^2.
\end{equation*}
If \(\|\lambda x + (1 - \lambda)y\| = 0\), then since \(\|y\|^a \geq ar^{a - 1}\|y\|\) we may use the strong quasiconvexity of \(\| \cdot \|\) on \(\Omega\) again to get the above inequality. We conclude that \(\| \cdot \|^a\) is strongly quasiconvex on $\Omega$.
\end{proof}

\section{Supercoercive Properties of Strongly Quasiconvex Functions}

In this section, we study the supercoercive properties of strongly quasiconvex functions on normed vector spaces. 

\begin{definition} Let $X$ be a normed vector space and \(n\) be a positive integer. A function $g\colon X\to \oR$ is said to be $n$-{\em supercoercive} if 
\begin{equation*}
    \liminf_{\|x\|\to \infty}\frac{g(x)}{\|x\|^n}>0.
\end{equation*}
Let $K$ be a nonempty subset of $X$, and let $f\colon K\to \R$. We say that $f$ is $n$-supercoercive on $K$ if the function $g$ given by 
\begin{equation*}
    g(x)=\begin{cases} f(x)&\mbox{\rm if }x\in K,\\
    \infty &\mbox{\rm if }x\in X\setminus K,
    \end{cases}
\end{equation*}
is $n$-supercoercive.
\end{definition}

The next result is a partial extension of \cite[Theorem 1]{Lara22} to infinite-dimensional Banach spaces. Here, we need the additional assumption of lower semicontiniuty at a point.
\begin{theorem}
    Let \(\sigma > 0\), \(X\) be a normed vector space, \(K \subset X\) be nonempty and convex, and \(f\colon  K \rightarrow \mathbb{R}\) be \(\sigma\)-strongly quasiconvex such that \(f\) is lower-semicontinuous at some point \(x_0 \in K\). Then \(f\) is $2$-supercoercive on $K$.
    \end{theorem}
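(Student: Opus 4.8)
The plan is to establish the pointwise quadratic lower bound $f(x)\ge f(x_0)+\frac{\sigma}{8}\|x-x_0\|^2$ for every $x\in K$ with $\|x-x_0\|$ sufficiently large; dividing by $\|x\|^2$ and letting $\|x\|\to\infty$ (noting that $\|x-x_0\|/\|x\|\to 1$) then gives $\liminf_{\|x\|\to\infty} g(x)/\|x\|^2\ge \sigma/8>0$, which is exactly $2$-supercoercivity (points outside $K$ contribute $g=\infty$ and are harmless). The only tools are the defining inequality of $\sigma$-strong quasiconvexity, used almost exclusively at $\lambda=\tfrac12$, together with the single hypothesis of lower semicontinuity at $x_0$.

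\textbf{Step 1 (the heart of the argument).} I would first convert lower semicontinuity at the single point $x_0$ into an ``outside a ball'' estimate. By lsc, choose $\delta_0>0$ with $f(w)>f(x_0)-1$ whenever $w\in K$ and $\|w-x_0\|<\delta_0$. Given any $z\in K$ with $\|z-x_0\|>\delta_0$, place the auxiliary point $w=x_0+\mu(z-x_0)$ with $\mu=\frac{\delta_0/2}{\|z-x_0\|}$, so that $w$ lies on $[x_0,z]\subset K$ at distance $\delta_0/2$ from $x_0$ and hence $f(w)>f(x_0)-1$. Applying strong quasiconvexity to the pair $(z,x_0)$ at the parameter $\mu$ (so that the convex combination is exactly $w$) and rearranging yields $\max\{f(z),f(x_0)\}\ge f(w)+\frac{\sigma}{2}\mu(1-\mu)\|z-x_0\|^2 > f(x_0)-1+\frac{\sigma}{2}\big(\tfrac{\delta_0}{2}\|z-x_0\|-\tfrac{\delta_0^2}{4}\big)$. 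The bracketed term grows linearly in $\|z-x_0\|$, so there is a radius $R_0>0$ such that for all $z\in K$ with $\|z-x_0\|>R_0$ the right-hand side exceeds $f(x_0)$; this forces $f(z)>f(x_0)$. In other words, $f$ can dip to or below the value $f(x_0)$ only inside the bounded region $\Bar{\B}(x_0;R_0)$.

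\textbf{Step 2 (bootstrapping to the quadratic bound).} Now take any $x\in K$ with $\|x-x_0\|>2R_0$ and let $p=\tfrac12(x+x_0)\in K$ be the midpoint. Since $\|p-x_0\|=\tfrac12\|x-x_0\|>R_0$, Step 1 gives $f(p)>f(x_0)$. Strong quasiconvexity at $\lambda=\tfrac12$ gives $f(p)\le \max\{f(x),f(x_0)\}-\frac{\sigma}{8}\|x-x_0\|^2$, whence $\max\{f(x),f(x_0)\}\ge f(p)+\frac{\sigma}{8}\|x-x_0\|^2>f(x_0)$. Because this maximum strictly exceeds $f(x_0)$, it must equal $f(x)$, and we obtain $f(x)\ge f(x_0)+\frac{\sigma}{8}\|x-x_0\|^2$, the estimate required for the first paragraph.

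I expect Step 1 to be the main obstacle. The delicate issue is that lower semicontinuity is assumed at only one point, while a priori a strongly quasiconvex function could take very negative values at points far from $x_0$, which would wreck supercoercivity. The content of Step 1 is precisely that this cannot occur: the strong quasiconvexity inequality, anchored at the near point $w$ where lsc keeps $f$ controlled, propagates the lower bound $f(x_0)$ outward to infinity and confines every such ``dip'' to the bounded ball $\Bar{\B}(x_0;R_0)$. Once this coercivity-type fact is available, Step 2 and the passage to the $\liminf$ are routine, and convexity of $K$ is all that is needed to keep the auxiliary points $w$ and $p$ inside $K$.
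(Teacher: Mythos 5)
Your proof is correct and follows essentially the same route as the paper: lower semicontinuity at $x_0$ controls $f$ on a small sphere around $x_0$, strong quasiconvexity along the segment $[x_0,z]$ propagates this into linear growth forcing $f(z)>f(x_0)$ outside a large ball, and the midpoint inequality at $\lambda=\tfrac{1}{2}$ then upgrades this to the quadratic lower bound $f(x)\ge f(x_0)+\frac{\sigma}{8}\|x-x_0\|^2$. The only difference is presentational: you deduce that the maximum equals $f(z)$ directly from the linear growth estimate, whereas the paper establishes the same fact by a sequence-based contradiction argument and records an intermediate $1$-supercoercivity statement.
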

\begin{proof}
Take \(\delta > 0\) such that for all \(\|y - x_0\| < \delta\) with $y\in K$, we have
\begin{equation*}
    f(x_0) - 1 < f(y).
\end{equation*}
Fix \(y \in K\) and assume without loss of generality that \(\|y - x_0\| > \frac{\delta}{2}\) (we consider the limit as $\|y\|\to \infty$, so we only need to consider $y$ in $K$ such that $\|y|$ is sufficiently large).  Then there exists \(\lambda \in (0, 1)\) such that
\begin{equation*}
    \|\lambda y + (1 - \lambda)x_0 - x_0\| = \frac{\delta}{2}.
\end{equation*}
(Indeed, observe that
\begin{equation*}
    \lambda y + (1 - \lambda)x_0 - x_0 = \lambda (y - x_0),
\end{equation*}
so it suffices to take \(\lambda = \frac{\delta}{2\|y - x_0\|}\)). Let \(x = \lambda y + (1 - \lambda)x_0\) with $\lambda = \frac{\delta}{2\|y - x_0\|}$. Then by the strong quasiconvexity of \(f\), we have
\begin{equation*}
\begin{aligned}
    f(x_0) - 1 &< f(x) \\
    &\leq \max\{f(x_0), f(y)\} - \frac{\sigma}{2} \lambda(1 - \lambda)\|y - x_0\|^2 \\
    &= \max\{f(x_0), f(y)\} - \frac{\sigma}{2} \frac{\delta}{2\|y - x_0\|}\left(1 - \frac{\delta}{2\|y - x_0\|}\right)\|y - x_0\|^2 \\
    &= \max\{f(x_0), f(y)\} - \frac{\sigma}{2} \frac{\delta}{2}\left(\|y - x_0\| - \frac{\delta}{2}\right).
\end{aligned}
\end{equation*}
Consequently,
\begin{equation*}
    f(x_0) < \max\{f(x_0), f(y)\} + 1 + \frac{\sigma \delta^2}{8} - \sigma \frac{\delta}{4}\|y - x_0\|.
\end{equation*}
We claim that there exists \(M > 0\) such that for all \(y \in K\) satisfying \(\|y\| > M\), we have \(\max\{f(x_0), f(y)\} = f(y)\). Indeed, suppose on the contrary that this is not the case. Then there  exists \(\{y_n\} \subset K\) such that \(\lim_{n \rightarrow \infty} \|y_n\| = \infty\) and \(\max\{f(x_0), f(y)\} = f(x_0)\). Since \(\overline{\B}(x_0; \frac{\delta}{2})\) is bounded, there exists a sequence \(N \in \mathbb{N}\) such that for all \(n \geq N\), we have
\begin{equation*}
    \|y_n - x_0\| > \frac{\delta}{2}.
\end{equation*}
Thus,
\begin{equation*}
    f(x_0) < f(x_0) + 1 + \frac{\sigma \delta^2}{8} - \sigma \frac{\delta}{4}\|y_n - x_0\|.
\end{equation*}
Taking \(n \rightarrow \infty\) thus yields a contradiction. For all $y\in K$ with \(\|y\| > \max\{M, \|x_0\| + \delta\}\), we thus obtain
\begin{equation*}
    f(x_0) - 1 - \frac{\sigma \delta^2}{8} +  \sigma \frac{\delta}{4}\|y - x_0\| < f(y).
\end{equation*}
It follows that \(f\) is $1$-supercoercive.

Now enlarge \(M\) so that \(\|y\| > M\) both implies \(f(y) \geq f(x_0)\) and \(f\left(\frac{x_0 + y}{2}\right) \geq f(x)\). Then applying the strong quasiconvexity of \(f\) again gives us
\begin{equation*}
\begin{aligned}
    f(x) &\leq f\left(\frac{x_0 + y}{2}\right) \\
    &\leq \max\{f(x_0), f(y)\} - \frac{\sigma}{8}\|y - x_0\|^2 \\
    &= f(y) - \frac{\sigma}{8}\|y - x_0\|^2.
\end{aligned}
\end{equation*}
Thus,
\begin{equation*}
    f(x_0) + \frac{\sigma}{8}\|y - x_0\|^2 \leq f(y).
\end{equation*}
We conclude that \(f\) is 2-supercoercive, as desired.
\end{proof}
\section{Existence and Uniqueness of Minimum under Strong Quasiconvexity and Lower Semicontinuity}
Given a function \(f\colon  K \rightarrow {\mathbb{R}}\) and \(\alpha \in \mathbb{R}\), define the \(\alpha\)-sublevel set of \(f\) by
\begin{equation*}
    L_\alpha = \{x \in K: f(x) \leq \alpha\}.
\end{equation*}
\begin{theorem}\label{StrongQuasiLscMin}
Let \(\sigma > 0\), \(X\) be a reflexive Banach space, \(K \subset X\) be nonempty, closed, and convex, and \(f\colon K \rightarrow {\mathbb{R}}\) be lower semicontinuous and \(\sigma\)-strongly quasiconvex. Then there then exists \(\overline{x} \in K\) such that \(\operatorname{argmin}_K(f) = \{\overline{x}\}\).  Furthermore, for any \(y \in K\) we have
\begin{equation*}
    f(\overline{x}) + \frac{\sigma}{4}\|y - \overline{x}\|^2 \leq f(y).
\end{equation*}
\end{theorem}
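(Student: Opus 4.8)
The plan is to prove the three assertions --- existence, uniqueness, and the quadratic growth estimate --- in that order, combining the direct method of the calculus of variations with the one-dimensional reduction of Proposition~\ref{DSQC}. Since $f$ is lower semicontinuous everywhere, it is in particular lower semicontinuous at one point, so the preceding supercoercivity theorem applies and $f$ is $2$-supercoercive on $K$. This immediately forces every nonempty sublevel set $L_\alpha$ to be bounded, for otherwise a sequence $x_n\in L_\alpha$ with $\|x_n\|\to\infty$ and $f(x_n)\le\alpha$ would contradict $\liminf_{\|x\|\to\infty} f(x)/\|x\|^2>0$.

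For \emph{existence} I would take a minimizing sequence $\{x_n\}$; after discarding finitely many terms it lies in a fixed bounded sublevel set, so by reflexivity it has a subsequence converging weakly to some $\overline{x}$, and $\overline{x}\in K$ because $K$, being closed and convex, is weakly closed (Mazur). The one delicate point is that $f$ is only quasiconvex, not convex, so I cannot quote the classical weak lower semicontinuity of convex functions. Instead I would use that $f$ quasiconvex and lower semicontinuous makes each $L_\alpha$ convex and closed, hence weakly closed, and a function all of whose sublevel sets are weakly closed is weakly sequentially lower semicontinuous. Applying this along the minimizing subsequence gives $f(\overline{x})\le\liminf f(x_{n_k})=\inf_K f$; since $f$ is real-valued this simultaneously shows $\inf_K f>-\infty$ and that it is attained at $\overline{x}$. \emph{Uniqueness} is then immediate: if $x_1\neq x_2$ were both minimizers with common value $m$, then $\sigma$-strong quasiconvexity at $\lambda=\tfrac12$ gives $\|\tfrac{x_1+x_2}{2}\|$-type estimate $f(\tfrac{x_1+x_2}{2})\le m-\tfrac{\sigma}{8}\|x_1-x_2\|^2<m$, contradicting minimality since $\tfrac{x_1+x_2}{2}\in K$.

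The crux is the growth estimate with the constant $\frac{\sigma}{4}$, and I would reduce it to one variable. Assuming $y\neq\overline{x}$ (the case $y=\overline{x}$ being trivial), Proposition~\ref{DSQC} shows that $h(t):=f(ty+(1-t)\overline{x})-f(\overline{x})$ is $\tau$-strongly quasiconvex on $[0,1]$ with $\tau=\sigma\|y-\overline{x}\|^2$, and by minimality of $\overline{x}$ it satisfies $h\ge 0$ and $h(0)=0$. The main obstacle is that the naive one-shot midpoint estimate only yields $h(1)\ge\tau/8$, i.e. the weaker constant $\tfrac{\sigma}{8}$. To recover the full $\tfrac{\sigma}{4}$ I would instead apply strong quasiconvexity with the fixed endpoint $0$: writing $w=(1-\mu)t$ and using $\max\{h(0),h(t)\}=h(t)$, one gets, for all $0<w<t\le 1$, the inequality $h(t)\ge h(w)+\tfrac{\tau}{2}\,w(t-w)$.

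Finally I would telescope this last inequality over the uniform partition $t_k=k/n$ of $[0,1]$. For $k\ge 1$ it gives $h(t_{k+1})-h(t_k)\ge\tfrac{\tau}{2}t_k(t_{k+1}-t_k)$, while the $k=0$ term contributes $h(t_1)\ge 0=\tfrac{\tau}{2}t_0(t_1-t_0)$; summing yields $h(1)\ge\tfrac{\tau}{2}\sum_{k=0}^{n-1}t_k(t_{k+1}-t_k)=\tfrac{\tau}{2}\cdot\tfrac{n-1}{2n}$. The right-hand side is a left Riemann sum for $\tfrac{\tau}{2}\int_0^1 t\,dt=\tfrac{\tau}{4}$, so letting $n\to\infty$ gives $f(y)-f(\overline{x})=h(1)\ge\tfrac{\tau}{4}=\tfrac{\sigma}{4}\|y-\overline{x}\|^2$, precisely the claimed estimate (which in particular re-proves uniqueness). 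I expect this integration/telescoping step to be where the genuine work lies; establishing the weak lower semicontinuity from quasiconvexity is the only other subtlety, and everything else is routine bookkeeping.
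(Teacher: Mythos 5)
Your proposal is correct. The existence and uniqueness arguments coincide with the paper's: coercivity from the supercoercivity theorem gives bounded sublevel sets, lower semicontinuity plus quasiconvexity gives closed convex (hence weakly closed) sublevel sets and thus weak lower semicontinuity, reflexivity supplies a weak limit of a minimizing sequence, and the midpoint inequality rules out two minimizers. Where you genuinely diverge is in upgrading the naive constant $\frac{\sigma}{8}$ to $\frac{\sigma}{4}$. The paper fixes $\lambda\in(0,1)$, iterates the two-point inequality along the geometric sequence $z_n=\lambda^n y+(1-\lambda^n)\overline{x}$ contracting toward $\overline{x}$, sums the resulting geometric series $\sum\lambda^{2j}$ to obtain $f(\overline{x})\le f(y)-\frac{\sigma}{2}\frac{\lambda}{1+\lambda}\|y-\overline{x}\|^2$, and then takes the supremum over $\lambda$, which equals $\frac12$. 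You instead reduce to the one-variable function $h(t)=f(ty+(1-t)\overline{x})-f(\overline{x})$ via Proposition~\ref{DSQC}, derive the increment bound $h(t)\ge h(w)+\frac{\tau}{2}w(t-w)$ from $h(0)=0$ and $h\ge 0$, and telescope over the uniform partition $t_k=k/n$, recognizing the left Riemann sum for $\frac{\tau}{2}\int_0^1 t\,dt=\frac{\tau}{4}$; I checked the increment inequality (with $\lambda=(t-w)/t$ one indeed gets $\lambda(1-\lambda)t^2=w(t-w)$) and the sum $\frac{n-1}{2n}\to\frac12$, so the constant comes out right. Both schemes are limits of iterated applications of the defining inequality along the segment $[\overline{x},y]$, but yours needs only a single limit and makes the provenance of the constant $\frac{\sigma}{4}$ transparent as $\int_0^1 t\,dt$, whereas the paper's requires the double passage (series, then supremum over $\lambda$); the paper's version, on the other hand, works directly in $X$ without invoking the one-dimensional reduction.
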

\begin{proof}
Note that the assumptions on \(f\) guarantee that its sublevel sets are closed and convex, so that \(f\) is weakly lower semicontinuous \cite[Proposition 2.161]{MN22}. Since \(f\) is coercive, its sublevel sets are also bounded. Let $\gamma=f(x_0)$ for some $x_0\in K$.  It is clear that
\begin{equation*}
    \inf_{x \in K \cap L_\gamma} f(x) = \inf_{x \in K} f(x).
\end{equation*}
Since \(K\) is closed and convex, \(K \cap L_\gamma\) is closed, convex, and bounded. It follows that \(K \cap L_\gamma\) is weakly compact \cite[Corollary 3.22]{B10}. Since \(f\) is weakly lower semicontinuous, it takes a minimum \(\overline{x} \in K\).
Now, suppose on the contrary that there exist  \(x_1, x_2 \in \operatorname{argmin}_K(f)\) such that \(x_1 \neq x_2\). Since \(K\) is convex, \(\frac{x_1 + x_2}{2} \in K\). By the strong quasiconvexity of \(f\), we have
\begin{equation*}
    f\left(\frac{x_1 + x_2}{2}\right) \leq \max\{f(x_1), f(x_2)\} - \frac{\sigma}{8}\|x_1 - x_2\|^2 < \inf_{x \in K} f(x).
\end{equation*}
By contradiction, \(\operatorname{argmin}_K(f)\) must have at most one point. Therefore, \(\operatorname{argmin}_K(f) = \{\overline{x}\}\).
Now let \(y \in K\). Since \(f(y)\geq f(\overline{x})\) and \(f\left(\frac{y + \overline{x}}{2}\right) \geq f(\overline{x})\), the \(\sigma\)-strong quasiconvexity of \(f\) implies
\begin{equation*}
    f(\overline{x}) + \frac{\sigma}{8}\|y - \overline{x}\|^2 \leq f(y).
\end{equation*}
We now improve the above estimate. Let \(\lambda \in (0, 1)\). Applying the \(\sigma\)-strong quasiconvexity of \(f\), we have
\begin{equation*}
    f(\lambda y + (1 - \lambda)\overline{x}) \leq \max\{f(\overline{x}), f(y)\} - \frac{\sigma}{2} \lambda(1 - \lambda)\|y - \overline{x}\|^2 = f(y) - \frac{\sigma}{2} \lambda(1 - \lambda)\|y - \overline{x}\|^2.
\end{equation*}
Now let \(n \in \mathbb{N}\). We claim that
\begin{equation*}
    f\left(\lambda^n y + (1 - \lambda^n)\overline{x}\right) \leq f(y) - \left(\sum_{j = 0}^{n - 1}\lambda^{2j}\right)\frac{\sigma}{2} \lambda(1 - \lambda)\|y - \overline{x}\|^2.
\end{equation*}
Indeed, we have already verified the above inequality for \(n = 1\). Now suppose it holds for some \(n\in \N\). Then for \(n + 1\), we have
\begin{equation*}
\begin{aligned}
    f(\lambda^{n + 1} y + (1 - \lambda^{n + 1})\overline{x}) &= f(\lambda(\lambda^n y + (1 - \lambda^n)\overline{x}) + (1 - \lambda)\overline{x}) \\
    &\leq \max\{f(\lambda^n y + (1 - \lambda^n)\overline{x}), f(\overline{x})\} - \frac{\sigma}{2}\lambda(1 - \lambda)\|\lambda^n y + (1 - \lambda^n)\overline{x} - \overline{x}\|^2 \\
    &= f(\lambda^n y + (1 - \lambda^n)\overline{x}) - \frac{\sigma}{2}\lambda(1 - \lambda)\|\lambda^n y - \lambda^n \overline{x}\|^2 \\
    &\leq f(y) - \left(\sum_{j = 0}^{n - 1}\lambda^{2j}\right)\frac{\sigma}{2} \lambda(1 - \lambda)\|y - \overline{x}\|^2 - \lambda^{2n}\sigma\lambda(1 - \lambda)\|y - \overline{x}\|^2 \\
    &= f(y) - \left(\sum_{j = 0}^n\lambda^{2j}\right)\frac{\sigma}{2} \lambda(1 - \lambda)\|y - \overline{x}\|^2.
\end{aligned}
\end{equation*}
By induction we have the desired inequality. Since \(\overline{x}\) minimizes \(f\), we have
\begin{equation*}
\begin{aligned}
    f(\overline{x}) &\leq \liminf_{n \rightarrow \infty} f(\lambda^n y + (1 - \lambda^n)\overline{x}) \\
    &\leq \liminf_{n \rightarrow \infty} \left(f(y) - \left(\sum_{j = 0}^{n - 1}\lambda^j\right)\sigma \lambda(1 - \lambda)\|y - \overline{x}\|^2\right) \\
    &= f(y) - \left(\sum_{j = 0}^\infty\lambda^{2j}\right)\frac{\sigma}{2} \lambda(1 - \lambda)\|y - \overline{x}\|^2 \\
    &= f(y) - \frac{\sigma}{2}\frac{\lambda (1 - \lambda)}{1 - \lambda^2}\|y - \overline{x}\|^2 \\
    &= f(y) - \frac{\sigma}{2}\frac{\lambda}{1 + \lambda}\|y - \overline{x}\|^2.
\end{aligned}
\end{equation*}
Hence,
\begin{equation*}
    f(\overline{x}) + \frac{\sigma}{2} \sup_{\lambda \in (0, 1)}\left(\frac{\lambda}{1 + \lambda}\right)\|y - \overline{x}\|^2 \leq f(y).
\end{equation*}
Since
\begin{equation*}
    \sup_{\lambda \in (0, 1)}\left(\frac{\lambda}{1 + \lambda}\right) = \frac{1}{2},
\end{equation*}
we conclude that
\begin{equation*}
    f(\overline{x}) + \frac{\sigma}{4}\|y - \overline{x}\|^2 \leq f(y),
\end{equation*}
which completes the proof. 
\end{proof}

Let \(X\) be a Banach space, and let \(f\colon X \rightarrow {\mathbb{R}}\). Define \(\operatorname{prox}_f\colon X \tto X\) by
\begin{equation*}
    \operatorname{prox}_f(v) = \underset{x \in X}{\operatorname{argmin}}\left(f(x) + \frac{1}{2}\|x - v\|^2\right), \; \ v\in X.
\end{equation*}
If \(f(x) + \frac{1}{2}\|x - v\|^2\) does not have a minimum, then \(\operatorname{prox}_f(v) = \varnothing\).

\begin{corollary} Let \(X\) be a reflexive Banach space. If \(f\) is strongly quasiconvex and lower semicontinuous, then \(\operatorname{prox}_f(v)\) is nonempty for all \(v \in X\).
\end{corollary}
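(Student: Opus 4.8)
The plan is to fix $v \in X$ and show that the auxiliary function
$$g_v(x) = f(x) + \tfrac{1}{2}\|x - v\|^2, \qquad x \in X,$$
attains its infimum over $X$; since $\operatorname{prox}_f(v) = \operatorname{argmin}_{x \in X} g_v(x)$, this is precisely what must be proved. The tempting shortcut --- to assert that $g_v$ is itself strongly quasiconvex and to quote Theorem~\ref{StrongQuasiLscMin} directly --- does \emph{not} work: as the paper stresses, $\sigma$-quasiconvexity is not preserved under addition, and the sum of a strongly quasiconvex function with the convex term $\tfrac12\|\cdot - v\|^2$ need not even be quasiconvex. I would therefore reprove existence by imitating the weak-compactness argument behind Theorem~\ref{StrongQuasiLscMin}, being careful to route everything through lower semicontinuity in the weak topology, which \emph{is} preserved under sums.

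First I would establish weak lower semicontinuity of $g_v$. Since $f$ is strongly quasiconvex it is in particular quasiconvex, so its sublevel sets are convex; being also lower semicontinuous, those sets are closed, hence weakly closed, and thus $f$ is weakly lower semicontinuous (this is exactly the step used in the proof of Theorem~\ref{StrongQuasiLscMin} via \cite[Proposition~2.161]{MN22}). The term $\psi(x) = \tfrac12\|x - v\|^2$ is convex and continuous, hence weakly lower semicontinuous. As both summands are real-valued, superadditivity of $\liminf$ yields that $g_v = f + \psi$ is weakly lower semicontinuous.

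Next I would establish coercivity. Because $f\colon X \to \R$ is $\sigma$-strongly quasiconvex and lower semicontinuous (in particular at some point), the $2$-supercoercivity theorem of Section~4 applies with $K = X$, so $f$ is $2$-supercoercive; in particular $f(x) \to \infty$ as $\|x\| \to \infty$. Since $\psi \ge 0$ we have $g_v \ge f$, so $g_v$ is coercive as well, and hence every sublevel set $\{x : g_v(x) \le \alpha\}$ is bounded.

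Finally I would combine the two ingredients. Fix any $x_0 \in X$, set $\alpha = g_v(x_0)$, and let $S = \{x \in X : g_v(x) \le \alpha\}$, which is nonempty, bounded (coercivity), and weakly closed (weak lower semicontinuity). Here is the one point requiring care: $S$ need \emph{not} be convex, because $g_v$ need not be quasiconvex, so I cannot invoke ``bounded closed convex $\Rightarrow$ weakly compact'' as in Theorem~\ref{StrongQuasiLscMin}. Instead I use that $S \subset \Bar{\B}(0; R)$ for some $R > 0$, that the ball $\Bar{\B}(0; R)$ is weakly compact in the reflexive space $X$, and that a weakly closed subset of a weakly compact set is weakly compact; hence $S$ is weakly compact. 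A weakly lower semicontinuous function attains its minimum on a weakly compact set, so $g_v$ attains its minimum on $S$, which is its global minimum over $X$. Therefore $\operatorname{prox}_f(v) = \operatorname{argmin}_{x \in X} g_v(x) \neq \varnothing$. The only genuinely delicate issue is the one flagged above --- resisting the temptation to inherit strong quasiconvexity (or even quasiconvexity) of $g_v$, and instead securing weak lower semicontinuity through convexity of the sublevel sets of $f$ together with weak lower semicontinuity of the quadratic term; after that, the compactness argument is routine.
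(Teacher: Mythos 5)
Your proof is correct and follows essentially the same route as the paper: establish coercivity of $g_v=f+\tfrac12\|\cdot-v\|^2$ from the supercoercivity of $f$, establish weak lower semicontinuity of $g_v$ from the convexity and closedness of the sublevel sets of $f$ together with the weak lower semicontinuity of the convex quadratic term, and then invoke reflexivity to extract a minimizer. Your extra care in noting that the sublevel sets of $g_v$ need not be convex, so that weak compactness must come from weak closedness inside a weakly compact ball rather than from convexity, is a detail the paper leaves implicit but does not change the argument.
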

\begin{proof}
By theorem~\ref{StrongQuasiLscMin}, \(f\) is coercive. It follows that \(g(x) = f(x) + \frac{1}{2}\|x - v\|^2\) is coercive. Since \(f\) is strongly quasiconvex and \(\frac{1}{2}\|\cdot - v\|^2\) is convex, and both are lower semicontinuous, we see that both of the functions are weakly lower semicontinuous so that g is weakly lower semicontinuous. Since \(X\) is reflexive, \(g\) takes a minimum on \(X\), and so \(\operatorname{prox}_f(v)\) is nonempty.
\end{proof}

 \section{Some Counterexamples on Strong Quasiconvexity}
Recall the following properties of convex functions:
\begin{enumerate}
    \item If \(f: \mathbb{R}^n \rightarrow \mathbb{R}\) is convex, then \(f\) is continuous \cite[Corollary 2.152]{MN22}.
    \item If \(X\) is a normed vector space and  \(f\colon X \rightarrow \mathbb{R}\), then \(f\) is convex iff it is locally convex \cite[Corollary 2.4]{LY10}.
    \item If \(X\) is a vector space, \(\Omega \subset X\) is convex, and \(f_1,  f_2\colon  X \rightarrow \R\) are both convex, then \(f_1 + f_2\) is convex.
\end{enumerate}
Here we provide counterexamples to each of the above properties if convexity is replaced with strong quasiconvexity. We also provide an example showing that a strongly quasiconvex function may not have a minimum if it is not lower semicontinuous.
\subsection{Discontinuous Strongly Quasiconvex Functions}
\begin{example}\label{DisconSQC}
Let \(f\colon  \mathbb{R} \rightarrow \mathbb{R}\) be defined as follows:
\begin{equation*}
    f(x) = 
    \begin{cases}
        x^2 & \text{if \(x \neq 0\)}, \\
        -1 & \text{if \(x = 0\)}.
    \end{cases}
\end{equation*}
Note that the function given by \(g(x) = x^2\) is $2$-strongly convex. Let \(x, y \in \mathbb{R}\) such that \(x < y\) and \(\lambda \in (0, 1)\). We then have
\begin{equation*}
    g(\lambda x + (1 - \lambda)y) \leq \lambda g(x) + (1 - \lambda)g(y) - \lambda(1 - \lambda) |y - x|^2.
\end{equation*}
Since \(f \leq g\) and \(f(z) = g(z)\) for all \(z \neq 0\), we have
\begin{equation*}
\begin{aligned}
    f(\lambda x + (1 - \lambda)y) &\leq g(\lambda x + (1 - \lambda)y) \\
    &\leq \lambda g(x) + (1 - \lambda)g(y) - \lambda(1 - \lambda) |y - x|^2 \\
    &\leq \max\{g(x), g(y)\} - \lambda(1 - \lambda) |y - x|^2.
\end{aligned}
\end{equation*}
If \(x, y \neq 0\), then \(f(x) = g(x)\) and \(f(y) = g(y)\), so that
\begin{equation*}
    \max\{g(x), g(y)\} = \max\{f(x), f(y)\}.
\end{equation*}
If \(x = 0\), then \(y > 0\) so that \(g(y) > g(x)\), \(f(y) > f(x)\), and \(f(y) = g(y)\). These facts combined imply that
\begin{equation*}
    \max\{g(x), g(y)\} = g(y) = f(y) = \max\{f(x), f(y)\}.
\end{equation*}
A similar argument holds for \(y = 0\). Therefore, we have
\begin{equation*}
    \max\{g(x), g(y)\} - 2\lambda(1 - \lambda) |y - x|^2 = \max\{f(x), f(y)\} - \lambda(1 - \lambda) |y - x|^2.
\end{equation*}
Furthermore,
\begin{equation*}
    f(\lambda x + (1 - \lambda)y) \leq \max\{f(x), f(y)\} - \lambda(1 - \lambda) |y - x|^2.
\end{equation*}
Therefore, \(f\) is $2$-strongly quasiconvex and clearly discontinuous.
\end{example}
We now give an example of a strongly quasiconvex function that is discontinuous at infinitely many points.
\begin{example}
Define the function \(f\colon  \mathbb{R} \rightarrow \mathbb{R}\) by
\begin{equation*}
    f(x) = x^2 + \lceil |x| \rceil, \ \; x\in \R.
\end{equation*}
Fix \(x, y \in \mathbb{R}\) and assume without loss of generality that \(|x| \leq |y|\). Then clearly \(f(x) \leq f(y)\). Define the function \(g\colon  \mathbb{R} \rightarrow \mathbb{R}\) by
\begin{equation*}
    g(z) = z^2 + \lceil |y| \rceil, \ \; z\in \R.
\end{equation*}
Note that \(g\) is $2$-strongly quasiconvex and \(g(x) \leq g(y)\). Then for every \(\lambda \in [0, 1]\), we have
\begin{equation*}
\begin{aligned}
    g(\lambda x + (1 - \lambda)y) &\leq \max\{g(x), g(y)\} - \lambda (1 - \lambda)|x - y|^2 \\
    &= g(y) - \lambda (1 - \lambda)|x - y|^2
\end{aligned}
\end{equation*}
Now observe that \(g(y) = f(y) = \max\{f(x), f(y)\}\) and that
\begin{equation*}
    |\lambda x + (1 - \lambda)y| \leq \lambda |x| + (1 - \lambda)|y| \leq |y|.
\end{equation*}
Thus,
\begin{equation*}
    f(\lambda x + (1 - \lambda)y) \leq g(\lambda x + (1 - \lambda)y).
\end{equation*}
We conclude that
\begin{equation*}
    f(\lambda x + (1 - \lambda)y) \leq \max\{f(x), f(y)\} - \lambda (1 - \lambda)|x - y|^2
\end{equation*}
Therefore, \(f\) is $2$-strongly quasiconvex and clearly discontinuous at every integer.
\end{example}
\subsection{Locally Strong Quasiconvexity and Strong Quasiconvexity}
We first define local \(\sigma\)-quasiconvexity analogously to local convexity.
\begin{definition}
Let \(X\) be a normed vector space, let \(f\colon X \rightarrow \R\), and let \(\sigma \geq 0\). We say that \(f\) is {\em locally} $\sigma$-{\em quasiconvex} if for all \(x \in X\), there exists \(\delta > 0\) such that \(f\) is $\sigma$-quasiconvex on \(\B(x; \delta)\).
\end{definition}
\begin{example}
Let \(f(x) = x\). Obviously, \(f\) is convex. Let \(x, y \in \mathbb{R}\) such that \(x < y\) and \(\lambda \in (0, 1)\). It is clear that
\begin{equation*}
    f(y) > f(x).
\end{equation*}
Hence, \(f(y) = \max\{f(x), f(y)\}\). For \(f(y) - f(\lambda x + (1 - \lambda)y)\), we have:
\begin{equation*}
    f(y) - f(\lambda x + (1 - \lambda)y) = \lambda (y - x)
\end{equation*}
Suppose that \(y - x < 1\). Then
\begin{equation*}
    \lambda (y - x) \geq \lambda(1 - \lambda)(y - x)^2.
\end{equation*}
Hence
\begin{equation*}
    f(y) - f(\lambda x + (1 - \lambda)y) \geq \lambda(1 - \lambda)(y - x)^2.
\end{equation*}
Then we have
\begin{equation*}
    f(\lambda x + (1 - \lambda)y) \leq \max\{f(x), f(y)\} - \lambda(1 - \lambda)|y - x|^2.
\end{equation*}
It follows that \(f\) is $2$-strongly quasiconvex on any open interval of length $1$. Thus, \(f\) is locally $2$-strongly quasiconvex.

Now, let \(r > 0\), \(x = -r\), and \(y = r\). Take \(\lambda = \frac{1}{2}\) so that \(\lambda x + (1 - \lambda)y = 0\). Clearly,
\begin{equation*}
    f(\lambda x + (1 - \lambda)y) = 0.
\end{equation*}
For \(f(y) - \lambda(1 - \lambda)|y - x|^2\), we have
\begin{equation*}
    f(y) - \lambda(1 - \lambda)|y - x|^2 = r - r^2.
\end{equation*}
Thus, if \(r > 1\), \(f\) fails the $2$-strong quasiconvexity condition. In fact, by taking \(r\) arbitrarily large, we see that \(f\) fails to be \(\sigma\)-strongly quasiconvex for any \(\sigma > 0\).
\end{example}
\subsection{Sum of Strongly Quasiconvex Functions}
\begin{example}
Let \(f\colon \mathbb{R} \rightarrow \mathbb{R}\) be the following function:
\begin{equation*}
    f(x) = 
    \begin{cases}
        x^2 & \text{if \(x \neq 0\)},\\
        -1 & \text{if \(x = 0\)}.
    \end{cases}
\end{equation*}
By example~\ref{DisconSQC}, \(f\) is $2$-strongly quasiconvex. Now, define \(g\colon  \mathbb{R} \rightarrow \mathbb{R}\) by
\begin{equation*}
    g(x) = f(x + 1) + f(x - 1).
\end{equation*}
Note that \(f(\cdot - 1)\) and \(f(\cdot + 1)\) are also $2$-strongly quasiconvex by Corollary~\ref{a comp}. Observe that
\begin{equation*}
    g(x) =
    \begin{cases}
        2(x^2 + 1) & \text{if \(|x| \neq 1\)}, \\
        3 & \text{if \(|x| = 1\)}.
    \end{cases}
\end{equation*}
Let \(x = -1\), \(y = 1\) and take \(\lambda \in (0, 1)\) such that
\begin{equation*}
    \lambda x + (1 - \lambda)y = \sqrt{\frac{3}{4}}.
\end{equation*}
Then
\begin{equation*}
    g(\lambda x + (1 - \lambda)y) = \frac{7}{2} > 3 = \max\{g(x), g(y)\}.
\end{equation*}
Thus, \(g\) is not even quasiconvex, so it is not strongly quasiconvex.
\end{example}

\subsection{A Strongly Quasiconvex Function that is not Lower Semicontinuous and takes no Minimum}
\begin{example}
Let \(f\colon [0,\infty) \rightarrow \mathbb{R}\) be the following function:
\begin{equation*}
    f(x) =
    \begin{cases}
        1 & \text{if \(x = 0\)}, \\
        x^2 & \text{if \(x > 0\)}.
    \end{cases}
\end{equation*}
Let \(x, y \in [0, \infty)\) and \(\lambda \in (0, 1)\). If \(x,y > 0\). Then
\begin{equation*}
    f(\lambda x + (1 - \lambda)y) \leq \max\{f(x), f(y)\} - \lambda(1 - \lambda)\|y - x\|^2.
\end{equation*}
Now, suppose that \(x = 0 < y\). Since \(\lambda x + (1 - \lambda)y > 0\), we have
\begin{equation*}
\begin{aligned}
    f(\lambda x + (1 - \lambda)y) &\leq \max\{0, y^2\} - \lambda(1 - \lambda)\|y - x\|^2 \\
    &\leq \max\{1, y^2\} - \lambda(1 - \lambda)\|y - x\|^2 \\
    &= \max\{f(x), f(y)\} - \lambda(1 - \lambda)\|y - x\|^2.
\end{aligned}
\end{equation*}
Thus, \(f\) is strongly quasiconvex, but clearly is lower semicontinuous and takes no minimum.
\end{example}
{\bf Acknowledgements}. We would like to express our gratitude to Prof. Nguyen Dong Yen for the valuable discussions on the concept of strong quasiconvexity and for his insightful question, addressed in Example~\ref{DisconSQC}, during the XIV International Symposium on Generalized Convexity and Monotonicity in Pisa, Italy.

	\end{document}